\begin{document}
\theoremstyle{plain}
\newtheorem{theorem}{Theorem}
\newtheorem{corollary}[theorem]{Corollary}
\newtheorem{lemma}[theorem]{Lemma}
\newtheorem{proposition}[theorem]{Proposition}
\newtheorem{remark}[theorem]{Remark}

\theoremstyle{definition}
\newtheorem{defn}{Definition}
\newtheorem{definition}[theorem]{Definition}
\newtheorem{example}[theorem]{Example}
\newtheorem{conjecture}[theorem]{Conjecture}

\title{On the combinatorial and rank properties of certain subsemigroups of full
contractions  of a finite chain }
\author{\bf  M. M. Zubairu \footnote{Corresponding Author. ~~Email: $mmzubairu.mth@buk.edu.ng$} A. Umar and M. J. Aliyu \\[3mm]
\it\small  Department of Mathematical Sciences, Bayero  University Kano, P. M. B. 3011, Kano, Nigeria\\
\it\small  \texttt{mmzubairu.mth@buk.edu.ng}\\[3mm]
\it\small Khalifa University, P. O. Box 127788, Sas al Nakhl, Abu Dhabi, UAE\\
\it\small \texttt{abdullahi.umar@ku.ac.ae}\\[3mm]
\it\small Department of Mathematics, and Computer Sciences, Sule Lamido University, Kafin Hausa\\
\it\small \texttt{muhammadaliyu2@nda.edu.ng}
}
\maketitle\

\begin{abstract}
 Let $[n]=\{1,2,\ldots,n\}$ be a finite chain and let  $\mathcal{CT}_{n}$ be the  semigroup of full contractions on $[n]$. Denote $\mathcal{ORCT}_{n}$ and $\mathcal{OCT}_{n}$ to be the subsemigroup of order preserving or reversing and the subsemigroup of order preserving full contractions, respectively. It was shown in \cite{am} that the collection of all regular elements (denoted by, Reg$(\mathcal{ORCT}_{n})$ and Reg$(\mathcal{OCT}_{n}$), respectively) and the collection of all idempotent elements (denoted by E$(\mathcal{ORCT}_{n})$ and E$(\mathcal{OCT}_{n}$), respectively) of the subsemigroups $\mathcal{ORCT}_{n}$ and $\mathcal{OCT}_{n}$, respectively are subsemigroups. In this paper, we study some combinatorial and rank properties of these subsemigroups.
 \end{abstract}
\emph{2010 Mathematics Subject Classification. 20M20.}\\
\textbf{Keywords:} Full Contractions maps on chain,  regular element, idempotents, rank properties.

\section{Introduction}
Denote $[n]=\{1,2,\ldots,n\}$ to be a finite chain and let  $\mathcal{T}_{n}$  denote the semigroup of full transformations of $[n]$. A transformation $\alpha\in \mathcal{T}_{n}$ is said to be \emph{order preserving} (resp., \emph{order reversing}) if  (for all $x,y \in [n]$) $x\leq y$ implies $x\alpha\leq y\alpha$ (resp., $x\alpha\geq y\alpha$); \emph{order decreasing} if (for all $x\in [n]$) $x\alpha\leq x$;  an \emph{isometry} (i.e., \emph{ distance preserving}) if (for all $x,y \in [n]$) $|x\alpha-y\alpha|=|x-y|$;   a \emph{contraction} if (for all $x,y \in [n]$) $|x\alpha-y\alpha|\leq |x-y|$. Let $\mathcal{CT}_{n}=\{\alpha\in \mathcal{T}_{n}: (\textnormal{for all }x,y\in [n])~\left|x\alpha-y\alpha\right|\leq\left|x-y\right|\}$ be the  semigroup of full contractions on $[n]$, as such $\mathcal{CT}_{n}$ is a subsemigroup of $\mathcal{T}_{n}$. Certain algebraic and combinatorial properties of this semigroup and some of its subsemigroups have been studied, for example see \cite{adu, leyla, garbac,kt, af, am, mzz, a1, a33}.

Let
 
 \noindent  \begin{equation}\label{ctn}\mathcal{OCT}_{n}=\{\alpha\in \mathcal{CT}_{n}: (\textnormal{for all}~x,y\in [n])~x\leq y \textnormal{ implies } x\alpha\leq y\alpha\},\end{equation} \noindent and
 \begin{equation}\label{orctn}\mathcal{ORCT}_{n}= \mathcal{OCT}_{n}\cup \{\alpha\in \mathcal{CT}_{n}: (\textnormal{for all}~x,y\in [n])~x\leq y ~ \textnormal{implies }  x\alpha\geq y\alpha\}\end{equation}
\noindent be the subsemigroups  of \emph{order preserving full contractions} and of \emph{order preserving or reversing  full contractions} on $[n]$, respectively. These subsemigroups are both known to be non-regular left abundant semigroups \cite{am} and their Green's relations have been characterized in \cite{mmz}. The ranks of $\mathcal{ORCT}_{n}$ and $\mathcal{OCT}_{n}$ were computed in \cite{kt} while the ranks of the two sided ideals of $\mathcal{ORCT}_{n}$ and $\mathcal{OCT}_{n}$ were computed by Leyla \cite{leyla}.

In 2021, Umar and Zubairu \cite{am} showed that the collection of all regular  elements (denoted by $\textnormal{Reg}(\mathcal{ORCT}_{n})$) of $\mathcal{ORCT}_{n}$ and also the collection of idempotent elements (denoted by $\textnormal{E}(\mathcal{ORCT}_{n})$) of $\mathcal{ORCT}_{n}$ are both subsemigroups of $\mathcal{ORCT}_{n}$. The two subsemigroups are both regular, in fact $\textnormal{Reg}(\mathcal{ORCT}_{n})$ has been shown to be an $\mathcal{L}-$ \emph{unipotent} semigroup (i.e., each ${L}-$class contains a unique idempotent). In fact, it was also shown in \cite{am} that the collection of all regular elements (denoted by Reg$\mathcal{OCT}_{n}$) in  $\mathcal{OCT}_{n}$ is a subsemigroup. However, combinatorial as well as rank properties of these semigroups are yet to be discussed, in this paper we discuss these properties, as such this paper is a natural sequel to Umar and Zubairu \cite{am}. For basic concepts in semigroup theory, we refer the reader to \cite{ maz, ph,howi}.

Let $S$ be a semigroup and $U$ be a subset of $S$, then $|U|$ is said to be the \emph{rank} of $S$ (denoted as $\textnormal{Rank}(S)$) if $$|U|=\min\{|A|: A\subseteq S \textnormal{ and } \langle A \rangle=S\}. $$

The notation $\langle U \rangle=S$ means that $U$ generate the semigroup $S$. The rank of several semigroups of transformation were investigated, see for example,  \cite{aj,ak2, gu, gu2, gu3, gm, mp}. However, there are several subsemigroups of full contractions which their ranks are yet to be known. In fact the order and the rank of the semigroup $\mathcal{CT}_{n}$ is still under investigation.

Let us briefly discuss the presentation of the paper . In section 1, we give a brief introduction and notations for proper understanding of the content of the remaining sections. In section 2, we discuss combinatorial properties for the semigroups $\textnormal{Reg}(\mathcal{ORCT}_n)$ and $\textnormal{E}(\mathcal{ORCT}_n)$, in particular we give their orders. In section 3, we proved that the rank of the semigroups $\textnormal{Reg}(\mathcal{ORCT}_n)$ and $\textnormal{E}(\mathcal{ORCT}_n)$ are 4 and 3, respectively, through the minimal generating set for their Rees quotient semigroups.

\section{Combinatorial Properties of $\textnormal{Reg}(\mathcal{ORCT}_n)$ and $\textnormal{E}(\mathcal{ORCT}_n)$ }
 In this section, we want to investigate some combinatorial properties of the semigroups,  $\textnormal{Reg}(\mathcal{ORCT}_n)$ and $\textnormal{E}(\mathcal{OCT}_n)$. In particular, we want to compute their Cardinalities. Let \begin{equation}\label{1}
    \alpha=\left( \begin{array}{cccc}
                           A_{1} & A_{2} & \ldots & A_{p} \\
                           x_{1} & x_{2} & \ldots & x_{p}
                         \end{array}
   \right)\in \mathcal{T}_{n}  ~~  (1\leq p\leq n),
    \end{equation}
then the \emph{rank} of $\alpha$ is defined and denoted by rank $(\alpha)=|\textnormal{Im }\alpha|=p$, so also, $x_{i}\alpha^{-1}=A_{i}$ ($1\leq i\leq p$) are equivalence classes under the relation $\textnormal{ker }\alpha=\{(x,y)\in [n]\times [n]: x\alpha=y\alpha\}$. Further, we denote the partition $(A_{1},\ldots, A_{p})$ by  $\textnormal{\textbf{Ker} }\alpha$ and also, fix$(\alpha)=|\{x\in[n]: x\alpha=x\}|$. A subset $T_{\alpha}$ of $[n]$ is said to be a \emph{transversal} of the partition $\textnormal{\textbf{Ker} }\alpha$ if $|T_{\alpha}|=p$, and $|A_{i}\cap T_{\alpha}|=1$ ($1\leq i\leq p$). A transversal  $T_{\alpha}$  is said to be \emph{convex} if for all $x,y\in T_{\alpha}$ with $x\leq y$ and if $x\leq z\leq y$ ($z\in [n]$), then $z\in T_{\alpha}$.

Before we proceed, lets describe some Green's relations on the semigroups $\textnormal{Reg}(\mathcal{ORCT}_n)$ and $\textnormal{E}(\mathcal{ORCT}_n)$. It is worth  noting that the two semigroups,  $\textnormal{Reg}(\mathcal{ORCT}_n)$ and $\textnormal{E}(\mathcal{ORCT}_n)$ are both regular subsemigroups of the Full Transformation semigroup $\mathcal{T}_n$, therefore by [\cite{howi}, Prop. 2.4.2] they automatically inherit the Green's $\mathcal{L}$ and $\mathcal{R}$ relations of the semigroup $\mathcal{T}_n$, but not necessary $\mathcal{D}$ relation, as such we have the following lemma.
\begin{lemma}
Let $\alpha,\beta \in S\in \{\textnormal{Reg}(\mathcal{ORCT}_n), \ \textnormal{E}(\mathcal{ORCT}_n)\}$, then
\begin{itemize}
\item[i] $\alpha \mathcal{R} \beta$ if and only if $\textnormal{Im }\alpha=\textnormal{Im }\beta$;
\item[ii] $\alpha \mathcal{L} \beta$ if and only if   $\textnormal{ker }\alpha=\textnormal{ker }\beta$.
\end{itemize}
\end{lemma}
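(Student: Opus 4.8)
\medskip
\noindent\textbf{Proof proposal.} Most of the work has in fact already been assembled in the paragraph preceding the statement, so the plan is simply to combine two ingredients. First, each of $\textnormal{Reg}(\mathcal{ORCT}_n)$ and $\textnormal{E}(\mathcal{ORCT}_n)$ is a \emph{regular} subsemigroup of $\mathcal{T}_n$: the former by the results of \cite{am} recalled in the Introduction, and the latter because it is a band, every idempotent being its own inverse. Hence, by [\cite{howi}, Prop.~2.4.2], for $S$ equal to either of these semigroups one has $\mathcal{R}^{S}=\mathcal{R}^{\mathcal{T}_n}\cap(S\times S)$ and $\mathcal{L}^{S}=\mathcal{L}^{\mathcal{T}_n}\cap(S\times S)$. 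Second, on the full transformation semigroup itself one has the classical description $\alpha\,\mathcal{R}\,\beta\Leftrightarrow\textnormal{Im}\,\alpha=\textnormal{Im}\,\beta$ and $\alpha\,\mathcal{L}\,\beta\Leftrightarrow\textnormal{ker}\,\alpha=\textnormal{ker}\,\beta$ for $\alpha,\beta\in\mathcal{T}_n$ (see \cite{howi}, read with the composition convention in force here). Feeding the second fact into the first should then give the lemma directly.

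Concretely, for (i): if $\alpha\,\mathcal{R}\,\beta$ in $S$ then $\alpha\,\mathcal{R}\,\beta$ in $\mathcal{T}_n$ — this direction needs nothing beyond $S\leq\mathcal{T}_n$ — whence $\textnormal{Im}\,\alpha=\textnormal{Im}\,\beta$; conversely, if $\textnormal{Im}\,\alpha=\textnormal{Im}\,\beta$ then $\alpha\,\mathcal{R}\,\beta$ in $\mathcal{T}_n$, and since $\alpha,\beta\in S$ the identity $\mathcal{R}^{S}=\mathcal{R}^{\mathcal{T}_n}\cap(S\times S)$ returns $\alpha\,\mathcal{R}\,\beta$ in $S$. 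Part (ii) is obtained verbatim with $\mathcal{L}$ and $\textnormal{ker}$ in place of $\mathcal{R}$ and $\textnormal{Im}$.

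I do not foresee a genuine obstacle. The inclusions $\mathcal{R}^{S}\subseteq\mathcal{R}^{\mathcal{T}_n}$ and $\mathcal{L}^{S}\subseteq\mathcal{L}^{\mathcal{T}_n}$ are automatic for any subsemigroup, the reverse inclusions are precisely the content of [\cite{howi}, Prop.~2.4.2], and the $\mathcal{T}_n$ description is standard. The only points that deserve an explicit line are that $\textnormal{E}(\mathcal{ORCT}_n)$ is regular as a semigroup in its own right because it is a band (so that Howie's proposition applies to it), and that the proposition is being invoked with the over-semigroup taken to be $\mathcal{T}_n$, which is regular, rather than $\mathcal{ORCT}_n$, which is not — so no analogous statement for $\mathcal{ORCT}_n$ is being used.
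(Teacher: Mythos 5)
Your proposal is correct and follows exactly the route the paper takes: both semigroups are regular subsemigroups of $\mathcal{T}_n$, so by [\cite{howi}, Prop.~2.4.2] they inherit the $\mathcal{R}$ and $\mathcal{L}$ relations of $\mathcal{T}_n$, which are then described via images and kernels in the standard way. Your added remarks (that $\textnormal{E}(\mathcal{ORCT}_n)$ is regular because it is a band, and that the ambient semigroup is $\mathcal{T}_n$ rather than the non-regular $\mathcal{ORCT}_n$) are correct and in fact make the argument more explicit than the paper's own one-paragraph justification.
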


\subsection{The Semigroup $\textnormal{Reg}(\mathcal{ORCT}_n)$}
Before we begin  discussing on the semigroup $\textnormal{Reg}(\mathcal{ORCT}_n)$, let us first of all consider the semigroup $\textnormal{Reg}(\mathcal{OCT}_n)$ consisting of only order-preserving elements.
 Let $\alpha$ be in $\textnormal{Reg}(\mathcal{OCT}_n)$, from [\cite{am}, Lem. 12], $\alpha$ is of the form $$\alpha=\left(\begin{array}{ccccc}
                                                                            \{1,\ldots,a+1\} & a+2 &  \ldots & a+p-1 & \{a+p,\ldots,n\} \\
                                                                            x+1 & x+2 & \ldots & x+p-1 & x+ p
                                                                          \end{array}
\right)$$\noindent 
Let

\begin{equation}\label{j} K_p=\{\alpha \in Reg(\mathcal{OCT}_n) : |\textnormal{Im }\alpha|=p\} \quad (1\leq p\leq n), \end{equation}

and suppose that $\alpha\in K_p$, by [\cite{az}, Lem. 12] Ker $ \alpha= \{\{1,\ldots,a+1\},a+2 \ldots, a+{p-1}, \{a+p,\ldots,n\} \}$  have an \emph{admissible} traversal (A transversal $T_{\alpha}$ is said to be {admissible} if and only if the map $A_{i}\mapsto t_{i}$  ($t_{i}\in T_{\alpha},\, i\in\{1,2,\ldots,p\}$) is a contraction, see \cite{mmz})  $T_\alpha= \{a+i\, : 1\leq i\leq p\}$ such that the mapping $a+i\mapsto x+i$ is an isometry. Therefore, translating the set $\{x+i :\, i\leq 1\leq p\}$  with an integer say $k$ to $\{x+i\pm k:\, 1\leq i\leq p\}$ will also serve as image set to $\textnormal{\textbf{Ker} }\alpha$ as long as $x+1-k\nless 1$ and $x+p +k \ngtr n$.

For example, if we define $\alpha$ as :
\begin{equation}\label{alf} \alpha= \left( \begin{array}{cccccc}
            \{1,\ldots,a+1\} & a+2& a_3 & \ldots & a+{p-1} & \{a+p,\ldots,n\} \\
            1 & 2 &  3& \ldots &p-1& p
          \end{array} \right).\end{equation}
then we will have $n-p$ other mappings in $K_p$ that will have the same domain as $\alpha$.

In similar manner, suppose we fix the image set $\{x+i |\, 1\leq i\leq p\}$ and consider $\textnormal{\textbf{Ker} }\alpha$, then we can refine the partition $\{\{1,\ldots,a+1\}, \{a+2\} \ldots, \{a+{p-1}\}, \{a+p,\ldots,n\} \}$ by $i-$shifting  to say $\{\{1,\ldots,a+i\}, \{a+i+1\} \ldots, \{a+{p-i}\}, \{a+p-i+1,\ldots,n\} \} $  for some integer $1\leq i\leq p $ which  also have an admissible convex traversal.

For the purpose of illustrations, if for some integer $j$, $\{\{1,\ldots,a+1\}, \{a+2\} \ldots, \{a+{p-1}\}, \{a+p,\ldots,n\} \}=\,\{\{1,2,\ldots j\}, \{j+1\}, \{j+2\}, \ldots, \{n\} \}$, then the translation $\{\{1,2,\ldots j-1\}, \{j\}, \{j+1\}, \ldots, \{n-1,n\} \}$ will also serve as domain to the image set of $\alpha$. Thus, for $p\neq 1$  we will have $n-p+1$ different mappings with the same domain set in $K_p$.

To see what we have been explaining, consider the table below; For $n\geq 4$, $2\leq p\leq n$ and $j=n-p+1$, the set $K_p$ can be presented as follows:

\begin{equation}\label{tabl}\resizebox{1\textwidth}{!}{$ \begin{array}{cccc}
     \left( \begin{array}{ccccc}
           \{1,\ldots j\}&j+1& \cdots &n-1& n \\
            1 & 2  & \ldots &p-1& p
          \end{array} \right)

          & \cdots  &

           \left( \begin{array}{ccccc}
           \{1,2\}&3& \cdots& \{p-1,\ldots n\} \\
            1 &   2& \ldots & p
          \end{array} \right)&

            \left( \begin{array}{ccccc}
           1&2& \cdots&p-1& \{p,\ldots n\} \\
            1 &   2& \cdots&p-1 & p
          \end{array} \right) \\

           \left( \begin{array}{ccccc}
           \{1,\ldots j\}&j+1& \cdots &n-1& n \\
            2 & 3 & \ldots &p& p+1
          \end{array} \right) & \cdots  &

           \left( \begin{array}{ccccc}
           \{1,2\}&3& \cdots& \{p-1,\ldots n\} \\
            2 &   3& \cdots & p+1
          \end{array} \right)&

           \left( \begin{array}{ccccc}
           1&2& \cdots&p-1& \{p,\ldots n\} \\
            2 &   3& \cdots&p & p+1
          \end{array} \right) \\ \vdots &\vdots& \vdots& \vdots

          \\
     \left( \begin{array}{ccccc}
           \{1,\ldots j\}&j+1& \cdots &n-1& n \\
            j-1 & j  & \ldots &n-2& n-1
          \end{array} \right)

          & \cdots  &

           \left( \begin{array}{ccccc}
           \{1,2\}&3& \cdots& \{p-1,\ldots n\} \\
            j-1 & j  & \ldots &n-2& n-1
          \end{array} \right)&

            \left( \begin{array}{ccccc}
           1&2& \cdots&p-1& \{p,\ldots n\} \\
             j-1 & j  & \ldots &n-2& n-1
          \end{array} \right) \\

           \left( \begin{array}{ccccc}
           \{1,\ldots j\}&j+1& \cdots &n-1& n \\
            j & j+1 & \ldots &n-1& n
          \end{array} \right) & \cdots  &

           \left( \begin{array}{ccccc}
           \{1,2\}&3& \cdots& \{p-1,\ldots n\} \\
            j & j+1 & \ldots &n-1& n

          \end{array} \right)&

           \left( \begin{array}{ccccc}
           1&2& \cdots&p-1& \{p,\ldots n\} \\
           j & j+1 & \ldots &n-1& n

          \end{array} \right)

     \end{array}$}\end{equation}

From the table above, we can see that for $p=1$, $|K_p|=n-p+1=n$, while for $2\leq p\leq n,\,$ $|K_p|=(n-p+1)^2$.

The next theorem gives us the cardinality of the semigroup $\textnormal{Reg}(\mathcal{OCT}_n)$.
\begin{theorem}\label{cadreg} Let $\mathcal{OCT}_n$ be as defined in equation \eqref{ctn}, then
$|\textnormal{Reg}(\mathcal{OCT}_n)|=\frac{n(n-1)(2n-1)+6n}{6}$.
\end{theorem}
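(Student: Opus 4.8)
The plan is to stratify $\textnormal{Reg}(\mathcal{OCT}_n)$ by rank and add up the strata. With $K_p$ as in \eqref{j} we have the disjoint union $\textnormal{Reg}(\mathcal{OCT}_n)=\bigcup_{p=1}^{n}K_p$, so that
\begin{equation*}
|\textnormal{Reg}(\mathcal{OCT}_n)|=\sum_{p=1}^{n}|K_p|,
\end{equation*}
and the whole problem reduces to knowing each $|K_p|$.

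The substantive input is the value of $|K_p|$, which I would justify exactly as in the discussion preceding the theorem (culminating in the array \eqref{tabl}): every element of $K_p$ is, by [\cite{am}, Lem. 12], determined by a pair consisting of an image set $\{x+1,\dots,x+p\}$ and a kernel partition whose transversal is admissible and convex; by the preceding Lemma on the $\mathcal{L}$ and $\mathcal{R}$ relations, distinct such pairs give distinct maps, and conversely every admissible pair is realised inside $\textnormal{Reg}(\mathcal{OCT}_n)$. Counting the degrees of freedom: there are $n-p+1$ possible image sets (the values of $x$ with $0\le x\le n-p$) and, for a fixed image set, $n-p+1$ admissible kernels obtained by the $i$-shifting described above. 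Hence $|K_p|=(n-p+1)^2$ for $2\le p\le n$, while for $p=1$ the image set already pins down the constant map, the square degenerates, and $|K_1|=n$. This enumeration — in particular checking that every $i$-shift of the kernel keeps the transversal admissible and convex, and that no pair is counted twice — is the step I expect to be the main obstacle; the remainder is arithmetic.

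Finally I would carry out the sum. Reindexing the tail by $k=n-p+1$,
\begin{equation*}
|\textnormal{Reg}(\mathcal{OCT}_n)|=|K_1|+\sum_{p=2}^{n}(n-p+1)^2=n+\sum_{k=1}^{n-1}k^2 ,
\end{equation*}
and then inserting the classical identity $\sum_{k=1}^{m}k^2=\tfrac{1}{6}m(m+1)(2m+1)$ with $m=n-1$ gives
\begin{equation*}
|\textnormal{Reg}(\mathcal{OCT}_n)|=n+\frac{(n-1)n(2n-1)}{6}=\frac{n(n-1)(2n-1)+6n}{6},
\end{equation*}
which is the asserted formula. As a safeguard I would also verify the small cases $n=1,2,3$ directly against the formula (they yield $1,3,8$ respectively), since the array \eqref{tabl} was set up under the standing assumption $n\ge 4$.
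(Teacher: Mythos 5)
Your proposal is correct and follows essentially the same route as the paper: decompose $\textnormal{Reg}(\mathcal{OCT}_n)$ into the disjoint union of the $K_p$, use $|K_1|=n$ and $|K_p|=(n-p+1)^2$ for $p\geq 2$ from the discussion preceding the theorem, and apply the sum-of-squares identity. Your additional care in justifying the count $|K_p|$ and checking small cases goes slightly beyond the paper's terse proof but does not change the argument.
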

\begin{proof} It is clear that $\textnormal{Reg}(\mathcal{OCT}_n)=K_1 \cup K_2 \cup \ldots \cup K_n$. Since this union is disjoint, we have that \begin{equation*}\begin{array}{c} |\textnormal{Reg}\mathcal{OCT}_n|=\sum_{p=1}^n|K_p|=|K_1|+\sum_{p=2}^n|K_p| = n+ \sum_{p=2}^n  (n-p+1)^2 \\ = n+(n-1)^2+(n-2)^2+ \cdots +2^2 +1^2 \\= \frac{n(n-1)(2n-1)+6n}{6},
\end{array}\end{equation*}\noindent as required.
\end{proof}

\begin{corollary}\label{cadreg2} Let $\mathcal{ORCT}_n$ be as defined in equation \eqref{orctn}. Then
$|\textnormal{Reg}(\mathcal{ORCT}_n)|=\frac{n(n-1)(2n-1)+6n}{3}-n$.
\end{corollary}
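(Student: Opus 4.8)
The plan is to decompose $\mathcal{ORCT}_n$ into its order-preserving part $\mathcal{OCT}_n$ and the set $V=\{\alpha\in\mathcal{ORCT}_n:\alpha\text{ is order-reversing}\}$, to transfer the count obtained in Theorem~\ref{cadreg} from $\textnormal{Reg}(\mathcal{OCT}_n)$ to $\textnormal{Reg}(\mathcal{ORCT}_n)\cap V$ by reflecting the chain, and then to conclude by inclusion--exclusion. Concretely, I would introduce the reflection $h\in\mathcal{T}_n$ given by $xh=n+1-x$: it is an order-reversing isometry, hence lies in $\mathcal{ORCT}_n$, and satisfies $h^2=1_{[n]}$.

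The first step is to check that $\alpha\mapsto h\alpha$ is a bijection from $\mathcal{OCT}_n$ onto $V$. For $\alpha\in\mathcal{CT}_n$ the product $h\alpha$ is again a contraction (a contraction composed with an isometry), it is order-reversing exactly when $\alpha$ is order-preserving, and since $h$ is bijective the map $\alpha\mapsto h\alpha$ is a bijection whose inverse is again left multiplication by $h$. The key step is then to show that this bijection restricts to a bijection between $\textnormal{Reg}(\mathcal{OCT}_n)$ and $\textnormal{Reg}(\mathcal{ORCT}_n)\cap V$: if $\alpha\beta\alpha=\alpha$ with $\beta\in\mathcal{OCT}_n$, then $(h\alpha)(\beta h)(h\alpha)=h\alpha\beta\alpha=h\alpha$ with $\beta h\in\mathcal{ORCT}_n$, so $h\alpha$ is regular in $\mathcal{ORCT}_n$; conversely, given a regular $\gamma\in V$ with generalized inverse $\delta\in\mathcal{ORCT}_n$, I would first replace $\delta$ by $\delta\gamma\delta$ so as to make $\delta$ order-reversing (a composite of an odd number of order-reversing maps reverses), whence $\delta h\in\mathcal{OCT}_n$ and $(h\gamma)(\delta h)(h\gamma)=h\gamma$ shows $h\gamma\in\textnormal{Reg}(\mathcal{OCT}_n)$ with $h(h\gamma)=\gamma$. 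This yields $|\textnormal{Reg}(\mathcal{ORCT}_n)\cap V|=|\textnormal{Reg}(\mathcal{OCT}_n)|$.

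To finish, note that by \eqref{orctn} every element of $\mathcal{ORCT}_n$ lies in $\mathcal{OCT}_n$ or in $V$, that $\textnormal{Reg}(\mathcal{ORCT}_n)\cap\mathcal{OCT}_n=\textnormal{Reg}(\mathcal{OCT}_n)$ (given a generalized inverse $\beta$ of an order-preserving $\alpha$ in $\mathcal{ORCT}_n$, the element $\beta\alpha\beta$ is an order-preserving contraction and a generalized inverse of $\alpha$), and that the two pieces $\textnormal{Reg}(\mathcal{OCT}_n)$ and $\textnormal{Reg}(\mathcal{ORCT}_n)\cap V$ overlap exactly in the maps that are simultaneously order-preserving and order-reversing, i.e.\ in the $n$ constant maps (each idempotent, hence regular, and lying in both). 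Inclusion--exclusion then gives $|\textnormal{Reg}(\mathcal{ORCT}_n)|=2\,|\textnormal{Reg}(\mathcal{OCT}_n)|-n$, and substituting the value from Theorem~\ref{cadreg} produces $\frac{n(n-1)(2n-1)+6n}{3}-n$. I expect the main obstacle to be the regularity-transfer step: one must make sure that multiplying a generalized inverse by $h$ (after adjusting its order-preserving/reversing parity) keeps it inside the correct subsemigroup; once that is settled, the remainder is routine bookkeeping.
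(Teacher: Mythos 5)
Your proposal is correct and follows essentially the same route as the paper: the paper's proof simply invokes the identity $|\textnormal{Reg}(\mathcal{ORCT}_n)|=2|\textnormal{Reg}(\mathcal{OCT}_n)|-n$ as a known fact and cites Theorem~\ref{cadreg}. Your argument via the reflection $x\mapsto n+1-x$, the regularity-transfer of generalized inverses, and inclusion--exclusion over the $n$ constant maps is a valid (and more detailed) justification of exactly that identity.
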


\begin{proof}
It follows from Theorem~\ref{cadreg} and the fact that $|\textnormal{Reg}(\mathcal{ORCT}_n)|=2|\textnormal{Reg}(\mathcal{OCT}_n)|-n$.
\end{proof}
 \subsection{The Semigroup $\textnormal{E}(\mathcal{ORCT}_n)$}
 Let $\alpha$ be in $\textnormal{E}(\mathcal{ORCT}_n)$, then it follows from  [\cite{am}, Lem. 13] that $\alpha$ is of the form \begin{equation}\label{alf} \alpha= \left( \begin{array}{cccccc}
            \{1,\ldots,i\} & i+1& i+2 & \ldots & i+j-1 & \{i+j, \ldots, n\} \\
            i & i+1 &  i+2& \ldots &i+j-1& i+j
          \end{array} \right).\end{equation} \noindent Since fix$(\alpha)=j+1$, then for each given domain set there will be only one corresponding image set.
          
Let
\begin{equation} E_p=\{\alpha \in \textnormal{E}(\mathcal{ORCT}_n) : |\textnormal{Im }\alpha|=p\} \quad (1\leq p\leq n). \end{equation}
To choose $\alpha\in E_p$ we only need to select the image set of $\alpha$ which is a $p$ consecutive(convex) numbers from the set $[n]$. Thus $|E_P|=n-p-1$. Consequently, we have the cardinality of the semigroup $\textnormal{E}(\mathcal{ORCT}_n)$.
\begin{theorem}\label{cidemp} Let $\mathcal{ORCT}_n$ be as defined in equation \eqref{orctn}. Then
$|\textnormal{E}(\mathcal{ORCT}_n)|=\frac{n(n+1)}{2}$.
\end{theorem}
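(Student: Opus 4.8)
The plan is to stratify $\textnormal{E}(\mathcal{ORCT}_n)$ by rank (image size) and count each stratum separately. Writing $\textnormal{E}(\mathcal{ORCT}_n)=E_1\cup E_2\cup\cdots\cup E_n$, the sets $E_p$ are pairwise disjoint, since a map has a single well-defined image size. Hence $|\textnormal{E}(\mathcal{ORCT}_n)|=\sum_{p=1}^n|E_p|$, and it suffices to compute $|E_p|$ for each $p$.

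The heart of the argument is the claim $|E_p|=n-p+1$. First I would invoke [\cite{am}, Lem. 13] to put an arbitrary $\alpha\in E_p$ in the normal form \eqref{alf}, whose image is the convex set $\{i,i+1,\ldots,i+j\}$ with $j+1=p$. Thus $\alpha\mapsto\textnormal{Im}\,\alpha$ maps $E_p$ into the family of $p$-element convex (consecutive) subsets of $[n]$. This map is injective: once the image is known, its minimum is $i$ and its cardinality is $j+1=p$, and \eqref{alf} then forces the domain partition $\{1,\ldots,i\},\{i+1\},\ldots,\{i+j-1\},\{i+j,\ldots,n\}$, so $\alpha$ is recovered uniquely. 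It is also surjective onto the convex $p$-subsets: given $\{a,a+1,\ldots,a+p-1\}$ with $1\le a\le n-p+1$, substituting $i=a$, $j=p-1$ into \eqref{alf} yields an order-preserving contraction which is idempotent (each of $i,\ldots,i+j$ is fixed, and the two end-blocks map to their unique fixed point), hence an element of $\textnormal{E}(\mathcal{ORCT}_n)$ of rank $p$ with that image. Therefore $\alpha\mapsto\textnormal{Im}\,\alpha$ is a bijection from $E_p$ onto the convex $p$-subsets of $[n]$, of which there are exactly $n-p+1$, indexed by their least element $a\in\{1,\ldots,n-p+1\}$. (The displayed count $|E_p|=n-p-1$ should read $n-p+1$.)

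Finally I would sum: $|\textnormal{E}(\mathcal{ORCT}_n)|=\sum_{p=1}^n(n-p+1)=\sum_{k=1}^n k=\frac{n(n+1)}{2}$, which is the asserted formula.

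I do not expect a genuine obstacle here; the only point needing care is the bijection in the middle step, i.e.\ simultaneously checking that every $\alpha\in E_p$ is determined by its image (uniqueness, immediate from the normal form once $p$ is fixed) and that every convex $p$-subset is actually attained (existence, by exhibiting the corresponding map in \eqref{alf} and noting it is manifestly an order-preserving contraction and idempotent). The degenerate endpoints should be mentioned in passing: $p=1$, where \eqref{alf} reduces to the $n$ constant maps, and $p=n$, the identity, both conform to the count $n-p+1$.
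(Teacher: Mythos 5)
Your proposal is correct and follows essentially the same route as the paper: stratify $\textnormal{E}(\mathcal{ORCT}_n)$ by rank, use the normal form from [\cite{am}, Lem.\ 13] to identify $E_p$ bijectively with the convex $p$-subsets of $[n]$ (so $|E_p|=n-p+1$; you rightly note the paper's displayed ``$n-p-1$'' is a typo), and sum to get $\frac{n(n+1)}{2}$. You supply the injectivity/surjectivity details that the paper leaves implicit, but the argument is the same.
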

\begin{proof} Following the  argument of the proof of Theorem \ref{cadreg} we have, \begin{equation*}\begin{array}{c} |\textnormal{E}(\mathcal{ORCT}_n)|=\sum_{p=1}^n|E_p|= \sum_{p=1}^n  (n-p+1) \\ = n+(n-1)+(n-2)+ \cdots +2 +1 \\= \frac{n(n+1)}{2}.
\end{array}\end{equation*}
\end{proof}
\begin{remark} Notice that idempotents in $\mathcal{ORCT}_n$ are necessarily order preserving, as such $|\textnormal{E}(\mathcal{OCT}_n)|=|\textnormal{E}(\mathcal{ORCT}_n)|= \frac{n(n+1)}{2}$.
\end{remark}

\section{Rank Properties}
In this section, we discuss some rank properties of the semigroups   $\textnormal{Reg}(\mathcal{ORCT}_n)$ and $\textnormal{E}(\mathcal{ORCT}_n)$.
 \subsection{Rank of $\textnormal{Reg}(\mathcal{OCT}_n)$}
 Just as in section 2 above, let us first consider the semigroup $\textnormal{Reg}(\mathcal{OCT}_n)$, the semigroup consisting of regular elements of order-preserving full contractions. Now, let $K_p$ be defined as in equation \eqref{j}. We have seen how elements of $K_p$ look like in Table \ref{tabl} above. Suppose we define: \begin{equation}\label{eta} \eta := \left( \begin{array}{ccccc}
           \{1,\ldots j\}&j+1& \cdots &n-1& n \\
            1 & 2  & \ldots &p-1& p
          \end{array} \right), \end{equation}
          \begin{equation}\label{delta} \delta :=  \left( \begin{array}{ccccc}
           1&2& \cdots&p-1& \{p,\ldots n\} \\
            1 &   2& \cdots&p-1 & p
          \end{array} \right) \end{equation} and
         \begin{equation}\label{tau} \tau:=  \left( \begin{array}{ccccc}
           1&2& \cdots&p-1& \{p,\ldots n\} \\
           j & j+1 & \ldots &n-1& n

          \end{array} \right) \end{equation}
that is, $\eta$ to be the  top left-corner element, $\delta$ be the top right-corner element while $\tau$ be the bottom right corner element in Table \ref{tabl}. And let $\textnormal{R}_\eta$ and $\textnormal{L}_\delta$ be the respective $\mathcal{R}$ and $\mathcal{L}$ equivalent classes of $\eta$ and $\delta$. Then for $\alpha$ in $K_p$ there exist two elements say $\eta'$ and $\delta'$ in $\textnormal{R}_\eta$ and $\textnormal{L}_\delta$, respectively for which $\alpha$  is $\mathcal{L}$ related to $\eta'$ and $\mathcal{R}$ related to $\delta'$ and that $\alpha=\eta'\delta'$. For the purpose of illustrations, consider
\begin{equation*} \alpha = \left( \begin{array}{ccccc}
           \{1,\ldots j-1\}&j&j+1& \cdots &\{n-1, n\} \\
            2 & 3&4  & \ldots &p+1
          \end{array} \right), \end{equation*} then the elements

    \begin{equation*} \left( \begin{array}{ccccc}
           \{1,\ldots j-1\}&j&j+1& \cdots &\{n-1, n\} \\
            1 & 2 &3 & \ldots & p
          \end{array} \right)\end{equation*} and
         \begin{equation*} \left( \begin{array}{ccccc}
           1&2& \cdots&p-1& \{p,\ldots n\} \\
            2 &   3& \cdots&p & p+1
          \end{array} \right)\end{equation*} are respectively elements of  $\textnormal{R}_\eta$ and $\textnormal{L}_\delta$ and that

\begin{equation*}\alpha =
     \left( \begin{array}{ccccc}
           \{1,\ldots j-1\}&j&j+1& \cdots &\{n-1, n\} \\
            1 & 2 &3 & \ldots & p
          \end{array} \right)
          \left( \begin{array}{ccccc}
           1&2& \cdots&p-1& \{p,\ldots n\} \\
            2 &   3& \cdots&p & p+1
          \end{array} \right). \end{equation*}
Consequently, we have the following lemma.
\begin{lemma}\label{jp} Let $\eta$ and $\delta$ be as defined  in equations \eqref{eta} and  \eqref{delta}, respectively. Then
$\langle \textnormal{R}_\eta \cup \textnormal{L}_\delta \rangle = K_p$.
\end{lemma}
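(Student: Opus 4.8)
The plan is to show the two inclusions $\langle \textnormal{R}_\eta \cup \textnormal{L}_\delta \rangle \subseteq K_p$ and $K_p \subseteq \langle \textnormal{R}_\eta \cup \textnormal{L}_\delta \rangle$ separately. For the first inclusion, observe that $\eta, \delta \in K_p$, so both $\textnormal{R}_\eta$ and $\textnormal{L}_\delta$ lie in $K_p$ (the $\mathcal{R}$- and $\mathcal{L}$-classes are taken inside $\textnormal{Reg}(\mathcal{OCT}_n)$ and by Lemma~1 preserve $|\textnormal{Im }\alpha|=p$); since composing two maps of rank $p$ inside $\textnormal{Reg}(\mathcal{OCT}_n)$ with the same rank... actually, one must check that products of elements of $\textnormal{R}_\eta\cup\textnormal{L}_\delta$ stay in $K_p$ rather than dropping rank. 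The cleanest way is to show directly that any product $\eta'\delta'$ with $\eta'\in\textnormal{R}_\eta$, $\delta'\in\textnormal{L}_\delta$ again has rank $p$: $\eta'$ has image $\{1,\ldots,p\}$ and $\delta'$ has kernel classes whose transversal is $\{1,\ldots,p-1,\{p,\ldots,n\}\}$-type, so $\textnormal{Im }\eta' = \{1,\dots,p\}$ meets every kernel class of $\delta'$, forcing $\textnormal{rank}(\eta'\delta')=p$; and conversely products of the form $\delta'\eta'$, $\eta'\eta''$, $\delta'\delta''$ must be examined likewise (or one shows $K_p$ is closed under the relevant compositions). The key combinatorial input is the explicit form of elements of $K_p$ from Table~\eqref{tabl}: every $\alpha\in K_p$ has $\textnormal{Im }\alpha$ a block of $p$ consecutive integers and $\textnormal{\textbf{Ker} }\alpha$ a partition with at most the first and last classes non-singleton.

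For the main inclusion $K_p \subseteq \langle \textnormal{R}_\eta \cup \textnormal{L}_\delta \rangle$, I would take an arbitrary $\alpha\in K_p$ and factor it as $\alpha = \eta'\delta'$ exactly as in the worked illustration preceding the lemma. Concretely: given $\alpha$ with $\textnormal{\textbf{Ker }}\alpha = (A_1,\ldots,A_p)$ and $\textnormal{Im }\alpha = \{x+1,\ldots,x+p\}$, let $\eta'$ be the element of $K_p$ with $\textnormal{\textbf{Ker }}\eta' = \textnormal{\textbf{Ker }}\alpha$ and $\textnormal{Im }\eta' = \{1,\ldots,p\}$ (this is the unique such element since, by the discussion after \eqref{alf}, kernel plus a choice of convex image determines the map, and it lies in $\textnormal{R}_\eta$ because it has image $\{1,\dots,p\}=\textnormal{Im }\eta$ — wait, $\textnormal{R}_\eta$ is the $\mathcal{R}$-class, i.e. fixed image; one needs $\eta'\,\mathcal{R}\,\eta$ which by Lemma~1(i) means $\textnormal{Im }\eta'=\textnormal{Im }\eta=\{1,\ldots,p\}$, correct). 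Then let $\delta'$ be the element with $\textnormal{\textbf{Ker }}\delta' = (\{1\},\ldots,\{p-1\},\{p,\ldots,n\})$ and $\textnormal{Im }\delta' = \textnormal{Im }\alpha$; by Lemma~1(ii) $\delta'\,\mathcal{L}\,\delta$, so $\delta'\in\textnormal{L}_\delta$. One then verifies $\eta'\delta' = \alpha$: applying $\eta'$ sends $A_i$ to $i$, and applying $\delta'$ sends $i\mapsto x+i$ for $1\le i\le p$ (using that $\delta'$ acts isometrically on $\{1,\ldots,p\}$ with image the convex set $\textnormal{Im }\alpha$), so the composite sends $A_i\mapsto x+i$, matching $\alpha$. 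This also re-proves $\eta',\delta',\alpha$ all have rank $p$, closing the loop with the first inclusion.

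The main obstacle is bookkeeping: one must be careful that the maps $\eta'$ and $\delta'$ so defined actually are elements of $\textnormal{Reg}(\mathcal{OCT}_n)$ — i.e. that the prescribed kernel admits an admissible convex transversal compatible with the prescribed image — and that the composite is computed correctly, including boundary cases where $\alpha$ itself has a singleton first or last kernel class (so $\alpha$ lies in a "corner" of Table~\eqref{tabl}). These are exactly the constraints "$x+1-k\not< 1$ and $x+p+k\not> n$" and the shifting discussion following \eqref{alf}; invoking [\cite{am}, Lem.~12] for the normal form of elements of $\textnormal{Reg}(\mathcal{OCT}_n)$ handles existence, and a short direct computation handles the product. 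I do not anticipate any genuinely hard step beyond this case analysis; the lemma is essentially the assertion that the "rectangular band"-like structure visible in Table~\eqref{tabl} is generated by one full row ($\textnormal{L}_\delta$) together with one full column ($\textnormal{R}_\eta$), which is the standard Rees-matrix generation pattern.
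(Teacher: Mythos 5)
Your factorization $\alpha=\eta'\delta'$ (same kernel as $\alpha$ with image $\{1,\ldots,p\}$, followed by the element of $\textnormal{L}_\delta$ with image $\textnormal{Im}\,\alpha$) is exactly the argument the paper gives via its worked illustration preceding the lemma, so the substantive direction $K_p\subseteq\langle \textnormal{R}_\eta\cup\textnormal{L}_\delta\rangle$ is handled the same way. Your worry about the converse inclusion is actually well placed but unresolvable: by the paper's own Remark~\ref{rtabl} products such as $\delta\alpha$ for $\alpha\in\textnormal{R}_\eta$ drop rank, so the literal set equality fails and the lemma should be read as asserting only that $\textnormal{R}_\eta\cup\textnormal{L}_\delta$ generates every element of $K_p$ (the rank-dropping products being absorbed later into the Rees quotient $Q_p$), which is what both you and the paper prove.
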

 \begin{remark}\label{rtabl}The following are  observed from Table \ref{tabl}:
\begin{itemize}
\item[(i)] The element $\delta$ belongs to both $\textnormal{R}_\eta$ and $\textnormal{L}_\delta$;
\item[(ii)] $\tau\eta=\delta$;
\item[(iii)] For all $\alpha\in \textnormal{R}_\eta$, $\alpha\delta=\alpha$ while $\delta\alpha$ has rank less than $p$;
\item[(iv)] For all $\alpha\in \textnormal{L}_\delta$, $\delta\alpha=\alpha$ while $\alpha\delta$ has rank less than $p$;
\item[(v)]For all $\alpha,\beta\in \textnormal{R}_\eta\backslash \delta$ ( or $\textnormal{L}_\delta\backslash \delta$), rank($\alpha\beta)<p$.
\end{itemize}

\end{remark}

To investigate the rank of $\textnormal{Reg}(\mathcal{OCT}_n)$, let
\begin{equation}\label{lnp} L(n,p)=\{\alpha \in \textnormal{Reg}(\mathcal{OCT}_n) : |\textnormal{Im }\alpha|\leq p\} \quad (1\leq p\leq n), \end{equation}\noindent and let \begin{equation} Q_p=L(n,p)\backslash L(n,p-1). \end{equation} Then $Q_p$ is of the form $K_p \cup \{0\}$, where $K_p$ is the set of all elements of $\textnormal{Reg}(\mathcal{OCT}_n)$ whose height is exactly $p$. The product of any two elements in $Q_p$  say $\alpha$ and $\beta$ is of the form: \begin{equation*}\alpha\ast \beta = \left\{ \begin{array}{ll}
                                                    \alpha\beta, & \hbox{if $|h(\alpha\beta)|=p$;} \\
                                                      0, & \hbox{if $|h(\alpha\beta)|<p$}
                                                    \end{array} \right. \end{equation*}

  $Q_p$ is called the Rees quotient semigroup on $L(n,p)$. Next, we have the following lemma which follows from Lemma \ref{jp} and Remark \ref{rtabl}.
 \begin{lemma}\label{lrees}
  $(\textnormal{R}_\eta \cup \textnormal{L}_\delta)\backslash \delta$ is the minimal generating set for the Rees quotient semigroup  $Q_p$.
 \end{lemma}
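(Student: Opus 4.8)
The plan is to show two things: first that the set $G := (\textnormal{R}_\eta \cup \textnormal{L}_\delta)\backslash \delta$ generates $Q_p$, and second that no proper subset of $G$ generates $Q_p$, so that $G$ is in fact the \emph{minimal} generating set (not merely a minimal one). For the generation claim, I would start from Lemma \ref{jp}, which gives $\langle \textnormal{R}_\eta \cup \textnormal{L}_\delta \rangle = K_p$ in $\textnormal{Reg}(\mathcal{OCT}_n)$. Passing to the Rees quotient $Q_p = K_p \cup \{0\}$, every element of $K_p$ is still a product of elements of $\textnormal{R}_\eta \cup \textnormal{L}_\delta$; the point is to remove $\delta$ from the generating set. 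By Remark \ref{rtabl}(ii) we have $\tau\eta = \delta$, and since $\tau \in \textnormal{L}_\delta$, $\eta \in \textnormal{R}_\eta$, and crucially $\tau \neq \delta$ and $\eta \neq \delta$ (they sit in different corners of Table \ref{tabl} whenever $p < n$; the degenerate case $p = n$, where $K_p$ is a single element and the statement is trivial or vacuous, should be noted separately), we recover $\delta \in \langle G \rangle$. Hence $\langle G \rangle \supseteq \langle \textnormal{R}_\eta \cup \textnormal{L}_\delta \rangle$ restricted to $Q_p$, which is all of $Q_p$ (the zero being, say, a product of rank-dropping elements guaranteed by Remark \ref{rtabl}(v), or simply adjoined). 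So $\langle G \rangle = Q_p$.

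For minimality, the key structural facts are Remark \ref{rtabl}(iii)--(v). By Lemma \ref{lemma:green} (the Green's-relations lemma at the start of Section 2), inside $K_p$ two elements are $\mathcal{R}$-related iff they have the same image and $\mathcal{L}$-related iff they have the same kernel; so $\textnormal{R}_\eta$ is precisely the set of elements with image $\textnormal{Im}\,\eta$ and $\textnormal{L}_\delta$ the set with kernel $\textnormal{Ker}\,\delta$. The argument is that any generating set $A$ of $Q_p$ must contain \emph{every} element of $G$: take $\alpha \in G$ and write $\alpha = a_1 a_2 \cdots a_k$ with each $a_i \in A \setminus \{0\}$ (a factorization avoiding $0$ exists since $\alpha \neq 0$ and $Q_p$ has zero multiplication once the rank drops). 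Since $\textnormal{rank}(\alpha) = p$ and rank is non-increasing under multiplication in $Q_p$, every $a_i$ has rank exactly $p$, so each $a_i \in K_p$. Now $\alpha$ is $\mathcal{R}$-related in $K_p$ to $a_1$ (same image as the first factor) and $\mathcal{L}$-related to $a_k$ (same kernel as the last factor). If $\alpha \in \textnormal{R}_\eta \setminus \delta$, then $a_1 \in \textnormal{R}_\eta$; I claim $a_1 = \alpha$. Indeed, if $k \geq 2$ then $a_1 a_2$ already has the image of $a_1$ and a kernel obtained from that of $a_2$; using Remark \ref{rtabl}(iii) and (v) — the product of two distinct elements of $\textnormal{R}_\eta$, or of an element of $\textnormal{R}_\eta \setminus \delta$ with anything other than $\delta$ on the right, drops rank — one forces $k = 1$, hence $\alpha = a_1 \in A$. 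The symmetric argument using (iv) and (v) handles $\alpha \in \textnormal{L}_\delta \setminus \delta$. Therefore $G \subseteq A$ for every generating set $A$, so $G$ is the unique minimal generating set and in particular $|G| = \textnormal{rank}(Q_p)$.

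The main obstacle I anticipate is the forcing step "$k = 1$" in the minimality argument: one must check carefully that no element of $K_p$ of rank $p$ can be obtained as a genuine (length $\geq 2$) product of generators from $G$ without a rank drop. This is where Remark \ref{rtabl}(v) — rank$(\alpha\beta) < p$ for distinct $\alpha,\beta$ both in $\textnormal{R}_\eta \setminus \delta$ or both in $\textnormal{L}_\delta \setminus \delta$ — together with the mixed cases (iii)/(iv) must be assembled into a complete case analysis on where $a_1$ and $a_k$ lie. One should also be slightly careful that the elements of $G$ that happen to have rank $< p$ (if any arise from the $\{0\}$ bookkeeping) do not interfere; in fact $\textnormal{R}_\eta$ and $\textnormal{L}_\delta$ consist entirely of rank-$p$ maps, so this worry is vacuous, but it is worth a sentence. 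The remaining steps — the Rees-quotient setup and the recovery of $\delta$ via $\tau\eta$ — are routine given the earlier lemma and remark.
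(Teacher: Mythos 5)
Your generation half is fine and is exactly what the paper intends: Lemma \ref{jp} gives $\langle \textnormal{R}_\eta\cup \textnormal{L}_\delta\rangle=K_p$, and $\delta=\tau\eta$ with $\tau,\eta\in(\textnormal{R}_\eta\cup \textnormal{L}_\delta)\setminus\{\delta\}$ (for $p<n$) puts $\delta$ back, so the set generates $Q_p$. (One bookkeeping slip: with the paper's left-to-right action $x\alpha$, a rank-preserving product $a_1\cdots a_k$ has the kernel of $a_1$ and the image of $a_k$, so under the paper's stated convention it is $\mathcal{L}$-related to $a_1$ and $\mathcal{R}$-related to $a_k$ --- you have the two roles swapped.)

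The minimality half, however, does not work, and the obstacle you yourself flagged (forcing $k=1$) is not repairable: Remark \ref{rtabl}(iii)--(v) only control products in which \emph{both} factors lie in $\textnormal{R}_\eta\cup \textnormal{L}_\delta$, whereas a competing generating set may use elements of $K_p$ outside that set, and then rank-$p$ factorizations of elements of $G$ do exist. Concretely, take $n=5$, $p=3$ and
\[
g_1=\left(\begin{array}{ccc} 1&2&\{3,4,5\}\\ 2&3&4\end{array}\right),\qquad
g_2=\left(\begin{array}{ccc} \{1,2\}&3&\{4,5\}\\ 3&4&5\end{array}\right),\qquad
g_3=\eta=\left(\begin{array}{ccc} \{1,2,3\}&4&5\\ 1&2&3\end{array}\right).
\]
Here $g_2\notin \textnormal{R}_\eta\cup \textnormal{L}_\delta$, yet $g_2g_3=\left(\begin{array}{ccc}\{1,2\}&3&\{4,5\}\\ 1&2&3\end{array}\right)\in \textnormal{R}_\eta\setminus\{\delta\}$ is a product of two rank-$3$ maps neither equal to itself, so it is \emph{not} forced into every generating set. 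Indeed $g_1g_2=\tau$, $g_1g_2g_3=\delta$, $g_3g_1$, $g_2g_3g_1$, $g_3g_1g_2$ supply the remaining elements, so $\{g_1,g_2,g_3\}$ generates all nine elements of $K_3$ and $Q_3$ has a generating set of size $3<4=|(\textnormal{R}_\eta\cup \textnormal{L}_\delta)\setminus\{\delta\}|$. Thus your conclusion that $G$ is the unique minimal generating set (and that $|G|=\textnormal{rank}(Q_p)$) is false whenever $n-p\ge 2$; $G$ is minimal only in the weak sense that no proper subset of $G$ generates. Be aware that the paper itself gives no argument here (it merely asserts the lemma "follows from Lemma \ref{jp} and Remark \ref{rtabl}"), so you are not missing a hidden step from the source; the same example also contradicts the downstream count $2(n-p)$ in Theorem \ref{trank}.
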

To find the generating set for $L(n,p)$, we need the following proposition:
\begin{proposition}\label{prees} For $n\geq4,\,$ $ \langle K_p \rangle\,\subseteq \,\langle K_{p+1}\rangle$ for all $1\leq p\leq n-2$.
\end{proposition}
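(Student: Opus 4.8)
The plan is to show that every element of $K_p$ can be written as a product of elements drawn from $K_{p+1}$, which immediately gives $\langle K_p\rangle \subseteq \langle K_{p+1}\rangle$. By Lemma~\ref{jp} it suffices to express each element of $\textnormal{R}_\eta$ and each element of $\textnormal{L}_\delta$ as a product of elements of $K_{p+1}$, since $K_p = \langle \textnormal{R}_\eta \cup \textnormal{L}_\delta\rangle$ and once every generator of $K_p$ lies in $\langle K_{p+1}\rangle$ the whole subsemigroup does. So the real task reduces to a factorisation lemma for the two ``extreme'' $\mathcal{R}$- and $\mathcal{L}$-classes.

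First I would fix a generic $\alpha \in \textnormal{R}_\eta$, so $\textnormal{Im}\,\alpha$ is a block of $p$ consecutive integers and $\textnormal{\textbf{Ker}}\,\alpha$ is the fixed partition $\{\{1,\ldots,j\},\{j+1\},\ldots,\{n\}\}$ with $j=n-p+1$. The idea is to ``open up'' the first kernel block by one step: choose $\gamma \in K_{p+1}$ whose kernel is $\{\{1,\ldots,j-1\},\{j\},\{j+1\},\ldots,\{n\}\}$ (a partition with $p+1$ classes, which is admissible and convex, hence genuinely realised in $\textnormal{Reg}(\mathcal{OCT}_n)$ with height $p+1$) mapping isometrically onto a suitable block of $p+1$ consecutive integers, and then post-multiply by an element $\mu \in K_{p+1}$ of the ``$\delta$-type'' (full convex domain, last block large) whose action collapses exactly the two top images back together and shifts the result onto $\textnormal{Im}\,\alpha$. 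One checks directly that $\gamma\mu$ has image a block of $p$ consecutive integers and kernel equal to $\textnormal{\textbf{Ker}}\,\alpha$, and since within a single $\mathcal{R}$-class (resp. $\mathcal{L}$-class) of $\textnormal{Reg}(\mathcal{OCT}_n)$ an element is determined by its image and kernel together (Lemma~1), this forces $\gamma\mu = \alpha$. The symmetric construction handles $\alpha \in \textnormal{L}_\delta$: here one refines/opens the large last kernel block, or dually splits an image block, using the analogous pair in $K_{p+1}$. The hypothesis $n\geq 4$ (together with $p\leq n-2$, so $p+1\leq n-1$) is exactly what guarantees there is ``room'': the refined $(p+1)$-block partition still has at least one genuine singleton to spare and the target images of size $p+1$ fit inside $[n]$ with a shift available in both directions, so the intermediate maps we need actually exist in $K_{p+1}$.

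The main obstacle I anticipate is bookkeeping rather than conceptual: one must exhibit the intermediate elements of $K_{p+1}$ explicitly (in the two-line notation of Table~\ref{tabl}) and verify that the composite has precisely the prescribed image set and kernel partition, being careful that the translations stay within $[n]$ and that the maps used are themselves contractions — i.e. that the partitions invoked really admit admissible convex transversals. A clean way to organise this is to prove it first for the two corner generators $\eta$ and $\delta$ of equations~\eqref{eta} and \eqref{delta}, then observe that any other element of $\textnormal{R}_\eta$ is obtained from $\eta$ by composing on the right with an order-preserving isometric shift (which can be absorbed into the $K_{p+1}$ factor), and similarly for $\textnormal{L}_\delta$; this avoids doing a separate computation for each of the $(n-p+1)$ elements in each class. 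Once both $\textnormal{R}_\eta \subseteq \langle K_{p+1}\rangle$ and $\textnormal{L}_\delta \subseteq \langle K_{p+1}\rangle$ are established, Lemma~\ref{jp} finishes the argument.
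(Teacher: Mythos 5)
Your proposal is correct and follows essentially the same route as the paper: reduce via Lemma~\ref{jp} to the generators in $\textnormal{R}_\eta \cup \textnormal{L}_\delta$, then factor each one as a product of two elements of $K_{p+1}$ — one with the kernel refined by splitting a terminal block, the other a shift of $\delta$-type that collapses the extra image point — which is exactly what the paper's Cases I--III carry out explicitly. The bookkeeping you defer (exhibiting the intermediate maps in two-line notation and checking they are admissible contractions) is precisely the content of the paper's proof.
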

\begin{proof}
Let $\langle A \rangle=K_p$, to proof $\langle K_p \rangle\,\subseteq \,\langle K_{p+1}\rangle$, it suffices to show that $A\subseteq \langle K_{p+1}\rangle$.  From Lemma \ref{lrees}  $A= (\textnormal{R}_{\eta} \cup \textnormal{L}_{\delta} )\backslash {\delta}$. Now, let $\alpha$ be in $A$:

CASE I: If $\alpha=\eta$,
          then $\alpha$ can be written as $\alpha=$

          \begin{equation*}\resizebox{1\textwidth}{!}{$ \left( \begin{array}{cccccc}
           \{1,\ldots j-1\}&j&j+1& \cdots &n-1& n \\
            j-2 & j-1&j  & \cdots&n-2 &n-1
          \end{array} \right) \left( \begin{array}{cccccc}
           \{1,\ldots j-1\}&j&j+1& \cdots &n-1& n\\
            1 & 2&3  & \cdots&p &p+1
          \end{array} \right),$}
\end{equation*} a product of two elements of $K_{p+1}$.

CASE II: If $\alpha\in \textnormal{R}_{\eta}\backslash \eta$, then $\alpha$ is of the form  \begin{equation*}\left( \begin{array}{ccccc}
           \{1,\ldots j-k\}&j-k+1& \cdots &n-2 &\{n-k,\ldots, n\} \\
            1 & 2 & \cdots&p-1 &p
          \end{array} \right), \, (k=1,2,\dots,j-2).\end{equation*} Then $\alpha $ can be written as:
          \begin{equation*}\resizebox{1\textwidth}{!}{$ \left( \begin{array}{cccc}
           \{1,\ldots, j-k-1\}&j-k & \cdots &\{n-k,\ldots, n\} \\
            j-k-1 & j-k  & \cdots &n-k
          \end{array} \right) \left( \begin{array}{ccccc}
           \{1,\ldots j-k\}&j-k+1& \cdots &n-k& \{n-k+1,\ldots,n\}\\
            1 & 2  & \cdots&p &p+1
          \end{array} \right),$}
\end{equation*} a product of two elements of $K_{p+1}$.

CASE III: If $\alpha\in \textnormal{L}_{\delta}\backslash \delta$, then $\alpha$ is of the form  \begin{equation*}\left( \begin{array}{ccccc}
           1&2& \cdots&p-1& \{p,p+1,\ldots n\} \\
            r &   r+1& \cdots& p+r-2 & p+r-1
          \end{array} \right),\, (r=2,3,\ldots, n-p+1)\end{equation*} and it can be written as:

\begin{equation*}\resizebox{1\textwidth}{!}{$
\left( \begin{array}{ccccc}
           1&2& \cdots&p& \{p+1,\ldots n\} \\
            2 &   3& \cdots&p+1 &p+2
          \end{array} \right)
         \left( \begin{array}{ccccc}
           1&2& \cdots&p& \{p+1,\ldots n\} \\
            r-1 &   r& \cdots&p+r-2 & p+r-1
          \end{array} \right),$}
\end{equation*}   hence the proof.
\end{proof}
\begin{remark}\label{rrank} Notice that by the proposition above, the generating set for $Q_p$ ($1\leq p\leq n-1$) generates the whole $L(n, p)$.
\end{remark}

The next theorem gives us the rank of the subsemigroup $L(n,p)$ for $1\leq p\leq n-1$.
\begin{theorem}\label{trank} Let $L(n,p)$ be as defined in equation \eqref{lnp}. Then
for $n\geq 4$ and $1<p\leq n-1$,  the rank of $L(n,p)$ is $2(n-p)$.
\end{theorem}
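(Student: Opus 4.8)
The plan is to establish the rank of $L(n,p)$ by combining the structural results already in hand with a Rees-quotient counting argument. First I would recall from Remark~\ref{rrank} that a generating set for the Rees quotient $Q_p$ already generates all of $L(n,p)$; by Lemma~\ref{lrees} a minimal generating set for $Q_p$ is $(\textnormal{R}_\eta\cup\textnormal{L}_\delta)\backslash\delta$. So the first step is to count $|(\textnormal{R}_\eta\cup\textnormal{L}_\delta)\backslash\delta|$. From Table~\ref{tabl} the $\mathcal{R}$-class $\textnormal{R}_\eta$ consists of the elements in the top row (those with image $\{1,2,\ldots,p\}$ but varying kernel), and $\textnormal{L}_\delta$ consists of the elements in the right column (those with kernel $\{1,\ldots,p-1,\{p,\ldots,n\}\}$ but varying image); each has exactly $n-p+1$ elements, and by Remark~\ref{rtabl}(i) they meet precisely in $\delta$. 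Hence $|\textnormal{R}_\eta\cup\textnormal{L}_\delta|=2(n-p+1)-1$ and removing $\delta$ gives $2(n-p+1)-2=2(n-p)$ elements. This yields the upper bound $\textnormal{Rank}(L(n,p))\leq 2(n-p)$.

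For the lower bound I would argue that no smaller set can generate $L(n,p)$. The key observation is that every element of $K_p$ (the top $\mathcal{J}$-class of $L(n,p)$) that lies outside $\langle K_{p-1}\rangle$ must itself appear in any generating set, because a product of elements of $L(n,p)$ lands in $K_p$ only if every factor already lies in $K_p$ — indeed if any factor has rank $<p$ the whole product has rank $<p$. So a generating set for $L(n,p)$ must contain a generating set, relative to $K_p$, for those elements of $K_p$ that are not obtainable as products of strictly-lower-rank elements. By Proposition~\ref{prees} and the structure of $Q_p$, the set of elements of $K_p$ that cannot be written as a product of two elements of $K_p$ is exactly $(\textnormal{R}_\eta\cup\textnormal{L}_\delta)\backslash\delta$ up to the $\mathcal{H}$-triviality of these classes; combined with Remark~\ref{rtabl}(ii)--(v), which shows $\delta$ itself is recoverable ($\delta=\tau\eta$) but no element of $(\textnormal{R}_\eta\cup\textnormal{L}_\delta)\backslash\delta$ is a product of two same-rank elements, every generating set must contain all $2(n-p)$ of these elements. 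Hence $\textnormal{Rank}(L(n,p))\geq 2(n-p)$, and together with the upper bound the theorem follows.

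The main obstacle I anticipate is the lower-bound argument — specifically, verifying rigorously that each of the $2(n-p)$ elements of $(\textnormal{R}_\eta\cup\textnormal{L}_\delta)\backslash\delta$ is genuinely indispensable, i.e.\ cannot be produced as a product of other elements of $L(n,p)$ of rank exactly $p$. This requires checking that in the Rees quotient $Q_p$, the complement of the generating set is a subsemigroup (equivalently, the generating set is contained in every generating set), which is where Remark~\ref{rtabl}(v) does the real work: any product of two elements of $\textnormal{R}_\eta\backslash\delta$ or of $\textnormal{L}_\delta\backslash\delta$ drops rank, and the mixed products are controlled by parts (iii)--(iv). One must also be slightly careful with the boundary case $p=n-1$ and confirm the formula $2(n-p)$ is consistent there, and note that the excluded case $p=1$ (where $K_1$ behaves differently, as already flagged after Table~\ref{tabl}) is legitimately outside the stated range $1<p\leq n-1$.
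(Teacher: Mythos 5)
Your proposal is correct and takes essentially the same route as the paper: the paper's proof simply cites Lemma~\ref{lrees} (minimality of $(\textnormal{R}_\eta\cup\textnormal{L}_\delta)\backslash\delta$ as a generating set for $Q_p$) and Remark~\ref{rrank} (that this set generates all of $L(n,p)$), and your count $|(\textnormal{R}_\eta\cup\textnormal{L}_\delta)\backslash\delta|=2(n-p+1)-2=2(n-p)$ supplies the arithmetic the paper leaves implicit. Your lower-bound discussion is an explicit unpacking of the indecomposability argument already packaged in Lemma~\ref{lrees} and Remark~\ref{rtabl}, so nothing genuinely new is needed.
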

\begin{proof}
It follows from Lemma \ref{lrees} and Remark \ref{rrank} above.
\end{proof}
Now as a consequence, we readily have the following corollaries.
 \begin{corollary}\label{cr1} Let $L(n,p)$ be as defined in equation \eqref{lnp}. Then
 the rank of $L(n,n-1)$ is 2.
 \end{corollary}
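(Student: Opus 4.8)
The plan is to derive this directly from Theorem~\ref{trank} by specialising $p = n-1$. First I would check that $n-1$ falls in the admissible range $1 < p \le n-1$ of Theorem~\ref{trank}: for $n \ge 4$ we have $n-1 \ge 3 > 1$, so the hypothesis $1 < p \le n-1$ is satisfied with $p = n-1$, and the theorem applies verbatim.

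Then I would substitute $p = n-1$ into the rank formula $2(n-p)$ given by Theorem~\ref{trank}, obtaining $2\bigl(n - (n-1)\bigr) = 2\cdot 1 = 2$. This yields $\textnormal{Rank}(L(n,n-1)) = 2$, which is exactly the assertion of the corollary.

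Since the corollary is an immediate numerical specialisation, there is essentially no obstacle; the only thing worth a sentence of commentary is a sanity check on the generating set. By Lemma~\ref{lrees} and Remark~\ref{rrank}, a minimal generating set for $L(n,n-1)$ is $(\textnormal{R}_\eta \cup \textnormal{L}_\delta)\setminus\{\delta\}$ at level $p = n-1$; when $p = n-1$ the index $j = n-p+1 = 2$, so $\textnormal{R}_\eta$ consists of the maps with image $\{1,\dots,n-1\}$ and kernel one of the two admissible convex partitions into $n-1$ classes (giving $n-p+1 = 2$ such maps), and likewise $\textnormal{L}_\delta$ has $2$ elements, with $\delta$ lying in both; removing the single shared element $\delta$ leaves $2 + 2 - 1 - 1 = 2$ generators, consistent with the stated rank. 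I would phrase the proof as a one-line deduction: ``This is immediate from Theorem~\ref{trank} on taking $p = n-1$.''

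\begin{proof}
This is immediate from Theorem~\ref{trank} by taking $p = n-1$, which gives rank $2(n-(n-1)) = 2$.
\end{proof}
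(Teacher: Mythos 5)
Your proof is correct and is exactly the paper's intended argument: the corollary is stated as an immediate consequence of Theorem~\ref{trank}, obtained by substituting $p=n-1$ into the formula $2(n-p)$. Your additional sanity check on the size of the generating set $(\textnormal{R}_\eta\cup\textnormal{L}_\delta)\setminus\{\delta\}$ at level $p=n-1$ is also consistent with the paper's counting.
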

 \begin{corollary}\label{cr2} Let $\mathcal{OCT}_n$ be as defined in equation \eqref{ctn}. Then
 the rank of $\textnormal{Reg}(\mathcal{OCT}_n)$ is 3.
 \end{corollary}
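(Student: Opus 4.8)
The plan is to realise $\textnormal{Reg}(\mathcal{OCT}_n)$ as the union $L(n,n)=K_n\cup L(n,n-1)$ and to build a generating set by augmenting a small generating set of $L(n,n-1)$ with a few elements of rank $n$. By Corollary~\ref{cr1}, $L(n,n-1)$ has rank $2$; by Remark~\ref{rrank} the two generators of its top layer $Q_{n-1}$ — namely the elements of $(\textnormal{R}_\eta\cup\textnormal{L}_\delta)\setminus\delta$ with $p=n-1$ — already generate all of $L(n,n-1)$. So the first step is to write down these two explicit maps (the unique non-$\delta$ element of $\textnormal{R}_\eta$ and the unique non-$\delta$ element of $\textnormal{L}_\delta$ at height $n-1$). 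The second step is to examine $K_n$: since an element of $\textnormal{Reg}(\mathcal{OCT}_n)$ of rank $n$ must be the identity on $[n]$, we have $|K_n|=1$ and $K_n=\{1_{[n]}\}$. Thus $\textnormal{Reg}(\mathcal{OCT}_n)=L(n,n-1)\cup\{1_{[n]}\}$, and adjoining the identity to a generating set is harmless in the sense that $1_{[n]}$ is not a product of two non-identity elements (rank is non-increasing under multiplication, and any product landing at rank $n$ forces both factors to be the identity). Hence $1_{[n]}$ is indecomposable and must lie in every generating set.

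The third step is the lower bound. I would argue that any generating set $A$ must contain: the identity $1_{[n]}$ (indecomposable, as above); and, because every element of $L(n,n-1)$ has rank at most $n-1$ while the only elements of rank $n$ available is the identity, products of elements of $A$ that reach $L(n,n-1)$ can only use the rank-$(n-1)$ (or lower) members of $A$ together with the identity (which acts trivially). Therefore $A\setminus\{1_{[n]}\}$ must already generate $L(n,n-1)$, which by Corollary~\ref{cr1} needs at least $2$ elements. This gives $\textnormal{Rank}(\textnormal{Reg}(\mathcal{OCT}_n))\ge 3$.

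For the upper bound I would exhibit the three-element set consisting of $1_{[n]}$ together with the two generators of $L(n,n-1)$ identified in the first step, and invoke Remark~\ref{rrank} (with $p=n-1$) to conclude that these two generate $L(n,n-1)$, so the three together generate $L(n,n-1)\cup\{1_{[n]}\}=\textnormal{Reg}(\mathcal{OCT}_n)$. Combining the two bounds yields rank exactly $3$.

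The main obstacle I anticipate is the lower-bound bookkeeping: one must be careful that no element of rank $\le n-1$ can be obtained in a way that circumvents needing two generators for $L(n,n-1)$, i.e. one must verify cleanly that multiplication cannot "create" rank and that the identity contributes nothing to generating the rank-deficient part. This is exactly the content already packaged in Theorem~\ref{trank} and Remark~\ref{rrank}, so the argument should reduce to citing those results once the decomposition $\textnormal{Reg}(\mathcal{OCT}_n)=L(n,n-1)\sqcup\{1_{[n]}\}$ is in place; the only genuinely new observation needed is that $K_n=\{1_{[n]}\}$ and that the identity is indecomposable.
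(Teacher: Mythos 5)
Your proposal is correct and follows essentially the same route as the paper: the paper's proof of Corollary~\ref{cr2} likewise rests on the decomposition $\textnormal{Reg}(\mathcal{OCT}_n)=L(n,n-1)\cup\{id_{[n]}\}$ together with Corollary~\ref{cr1}. Your write-up merely makes explicit the details the paper leaves implicit (that $K_n=\{id_{[n]}\}$, that the identity is indecomposable and hence forced into every generating set, and that the remaining generators must already generate $L(n,n-1)$), all of which are sound.
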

 \begin{proof} The proof follows from Corollary \ref{cr1} coupled with the fact that $\textnormal{Reg}(\mathcal{OCT}_n)= L(n,n-1)\cup id_{[n]}$, where $id_{[n]}$ is the identity element on $[n]$.
 \end{proof}

\subsection{Rank of $\textnormal{Reg}(\mathcal{ORCT}_n)$}
To discuss the rank of  $\textnormal{Reg}(\mathcal{ORCT}_n)$, consider the Table \ref{tabl} above. Suppose we reverse the order of the image set of elements in that table, then we will have the set of order-reversing elements of $\textnormal{Reg}(\mathcal{ORCT}_n)$. For  $1\leq p\leq n$, let
\begin{equation}J_p=\{\alpha \in \textnormal{Reg}(\mathcal{ORCT}_n) : |\textnormal{Im }\alpha|= p\} \end{equation} and let
\begin{equation}K_p^*=\{\alpha \in J_p : \alpha \textrm{ is order-reversing} \}. \end{equation}

Observe that $J_p= K_p \cup K_p^*$. Now define:  \begin{equation}\label{eta2} \eta^* = \left( \begin{array}{ccccc}
           \{1,\ldots j\}&j+1& \cdots &n-1& n \\
            p & p-1  & \ldots & 2 & 1
          \end{array} \right), \end{equation}
          \begin{equation}\label{delta2} \delta^* =  \left( \begin{array}{ccccc}
           1&2& \cdots&p-1& \{p,\ldots n\} \\
            p &  p-1 & \cdots& 2 & 1
          \end{array} \right) \end{equation} and
         \begin{equation}\label{tau2} \tau^* =  \left( \begin{array}{ccccc}
           1&2& \cdots&p-1& \{p,\ldots n\} \\
           n & n-1 & \ldots & j+1 & j

          \end{array} \right) \end{equation}

          i.e., $\eta^*, \delta^*$ and $\tau^*$ are respectively $\eta, \delta$ and $\tau$ with image order-reversed.

       \begin{remark}
       Throughout this section, we will write $\alpha^*$ to mean a mapping in $K_p^*$ which has a corresponding mapping $\alpha$ in $K_p$ with order-preserving image.
       \end{remark}
          And let $R_{\eta^*}$ and $L_{\delta^*}$ be the respective $\mathcal{R}$ and $\mathcal{L}$ equivalent classes of $\eta$ and $\delta$. Then we have the following lemmas which are analogue to Lemma \ref{jp}.

\begin{lemma}\label{jp2} Let $\eta$ and $\delta^*$ be as defined  in equations \eqref{eta} and \eqref{delta2}, respectively. Then
$\langle \textnormal{R}_\eta \cup \textnormal{L}_{\delta^*} \rangle = K_p^*$.
\end{lemma}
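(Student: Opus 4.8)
The plan is to mirror the treatment of Lemma~\ref{jp}: I will show that $\textnormal{R}_\eta\cup\textnormal{L}_{\delta^*}$ generates $K_p^*$ by producing, for each $\alpha^*\in K_p^*$, an explicit factorization $\alpha^*=\eta'\delta'^*$ with $\eta'\in\textnormal{R}_\eta$ and $\delta'^*\in\textnormal{L}_{\delta^*}$, and conversely by checking that every such product lands back in $K_p^*$. Here I read $\textnormal{R}_\eta$ exactly as in the previous subsection, namely the order-preserving rank-$p$ maps with image $\{1,\ldots,p\}$ (the top row of Table~\ref{tabl}), and $\textnormal{L}_{\delta^*}$ as the order-reversing rank-$p$ maps sharing the kernel $\{1,2,\ldots,p-1,\{p,\ldots,n\}\}$ of $\delta^*$, so that the statement is read in the same generating sense as Lemma~\ref{jp}.

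First I would write a generic element of $K_p^*$ in the order-reversed normal form coming from [\cite{am}, Lem. 12], namely $\alpha^*$ with kernel blocks $A_1=\{1,\ldots,a+1\}$, $A_2=\{a+2\},\ldots,A_{p-1}=\{a+p-1\}$, $A_p=\{a+p,\ldots,n\}$ and image $\{x+1,\ldots,x+p\}$ assigned in reverse, so that $A_i\mapsto x+p+1-i$. I then define $\eta'$ to be the order-preserving map with $\textnormal{ker}\,\eta'=\textnormal{ker}\,\alpha^*$ and image $\{1,\ldots,p\}$ (so $A_i\mapsto i$), and $\delta'^*$ to be the order-reversing map with kernel $\{1,\ldots,p-1,\{p,\ldots,n\}\}$ and image $\textnormal{Im}\,\alpha^*$ assigned in reverse (so $i\mapsto x+p+1-i$ and $\{p,\ldots,n\}\mapsto x+1$). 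A direct composition gives $A_i\overset{\eta'}{\mapsto}i\overset{\delta'^*}{\mapsto}x+p+1-i$, which is precisely $\alpha^*$; this yields $K_p^*\subseteq\langle\textnormal{R}_\eta\cup\textnormal{L}_{\delta^*}\rangle$. The routine verifications here are that $\eta'$ and $\delta'^*$ genuinely lie in $\textnormal{R}_\eta$ and $\textnormal{L}_{\delta^*}$: both transversal maps $a+i\mapsto i$ and $i\mapsto x+p+1-i$ are isometries on consecutive points, hence contractions, and the normal form of [\cite{am}, Lem. 12] identifies them as regular, placing $\eta'$ in $K_p$ with the correct image and $\delta'^*$ in $K_p^*$ with the correct kernel.

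For the reverse containment I would take arbitrary $\eta'\in\textnormal{R}_\eta$ and $\delta'^*\in\textnormal{L}_{\delta^*}$ and compute the sandwich product $\eta'\delta'^*$. Since $\textnormal{Im}\,\eta'=\{1,\ldots,p\}$ and $\delta'^*$ is injective on $\{1,\ldots,p\}$, the product has the same kernel as $\eta'$ and image equal to $\textnormal{Im}\,\delta'^*$, so it has rank exactly $p$; because $\eta'$ sends the blocks in increasing order to $1,\ldots,p$ while $\delta'^*$ sends $1,\ldots,p$ to a strictly decreasing string, the composite reverses order. As $\textnormal{ker}(\eta'\delta'^*)=\textnormal{ker}\,\eta'$ is a regular (convex, admissible) kernel and $\textnormal{Im}\,\delta'^*$ is a block of consecutive integers, the order-reversed normal form again shows $\eta'\delta'^*\in K_p^*$. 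Combining the two containments gives $\langle\textnormal{R}_\eta\cup\textnormal{L}_{\delta^*}\rangle=K_p^*$ in the same generating sense as Lemma~\ref{jp}.

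I expect the main obstacle to be the regularity bookkeeping rather than the arithmetic: since $\textnormal{Reg}(\mathcal{ORCT}_n)$ is not itself a regular semigroup, I must make sure the constructed $\eta',\delta'^*$ and the products $\eta'\delta'^*$ all stay inside the regular set, which I control by checking that the kernel remains of the admissible convex type and the image remains an interval, exactly the data characterizing $K_p$ and $K_p^*$ via [\cite{am}, Lem. 12]. A secondary point worth a sentence is the order-of-composition convention (maps acting on the right, so $\eta'\delta'^*$ means $\eta'$ first), since it is precisely the order-reversal of the image in $\delta'^*$ that turns the order-preserving factor $\eta'$ into an order-reversing product.
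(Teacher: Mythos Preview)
Your argument is correct and follows essentially the same route as the paper: both factor an arbitrary $\alpha^*\in K_p^*$ as $\eta'\delta'^*$ with $\eta'\in\textnormal{R}_\eta$ sharing the kernel of $\alpha^*$ and $\delta'^*\in\textnormal{L}_{\delta^*}$ sharing its image; the paper obtains this factorization by invoking Lemma~\ref{jp} for the associated $\alpha\in K_p$ and then reversing the image of the second factor, while you write the two factors down explicitly. Two small remarks: your discussion of the reverse containment is more than the paper itself provides (the paper, like Lemma~\ref{jp}, only argues the inclusion $K_p^*\subseteq\langle\textnormal{R}_\eta\cup\textnormal{L}_{\delta^*}\rangle$), and your aside that $\textnormal{Reg}(\mathcal{ORCT}_n)$ ``is not itself a regular semigroup'' is mistaken---the paper records in the introduction that it is regular---though this does not affect your verification.
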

 \begin{proof}
 Let $ \alpha^*= \left( \begin{array}{ccccc}
            \{1,\ldots,a+1\} & a+2 & \ldots & a+{p-1} & \{a+p,\ldots,n\} \\
            x+p & x+{p-1} & \ldots &x+2& x+1
          \end{array} \right)$ be in $K_p^*$, then there exists  $\alpha\in K_p$ such that by Lemma \ref{jp}, $\alpha$ can be expressed as the following product:

          \begin{equation*}  \left( \begin{array}{ccccc}
            \{1,\ldots,a+1\} & a+2& \ldots & a+{p-1} & \{a+p,\ldots,n\} \\
            y+1 & y+2 & \ldots &y+{p-1}& y+p
          \end{array} \right) \left( \begin{array}{ccccc}
            \{1,\ldots,b+1\} & b+2 & \ldots & b+{p-1} & \{b+p,\ldots,n\} \\
            x+1 & x+2 &  \ldots &x+{p-1}& x+p
          \end{array} \right)\end{equation*} a product of elements of  $\textnormal{R}_\eta$ and $\textnormal{L}_\delta$, respectively. Therefore,
           $\alpha^*$ can be expressed as the following product:
          \begin{equation*}  \left( \begin{array}{ccccc}
            \{1,\ldots,a+1\} & a+2& \ldots & a+{p-1} & \{a+p,\ldots,n\} \\
            y+1 & y+2 & \ldots &y+{p-1}& y+p
          \end{array} \right) \left( \begin{array}{ccccc}
            \{1,\ldots,b+1\} & b+2 & \ldots & b+{p-1} & \{b+p,\ldots,n\} \\
            x+1 & x+2 &  \ldots &x+{p-1}& x+p
          \end{array} \right)\end{equation*} a product of elements of  $\textnormal{R}_\eta$ and $\textnormal{L}_{\delta^*}$, respectively.
 \end{proof}

 \begin{lemma}\label{jp3}  Let $J_p=\{\alpha \in \textnormal{Reg}(\mathcal{ORCT}_n) : |\textnormal{Im }\alpha|= p\}$. Then, $\langle R_\eta \cup L_{\delta^*} \rangle = J_p$.
\end{lemma}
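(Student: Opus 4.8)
The plan is to show $\langle R_\eta \cup L_{\delta^*}\rangle = J_p$ by combining the two preceding lemmas, which already handle the order-preserving and order-reversing ``halves'' of $J_p$ separately, together with the observation that $J_p = K_p \cup K_p^*$ is an exact (not necessarily disjoint) decomposition. First I would note the obvious inclusion $\langle R_\eta \cup L_{\delta^*}\rangle \subseteq J_p$: every generator lies in $J_p$ (indeed $\eta\in K_p\subseteq J_p$ and every element of $L_{\delta^*}$ has rank $p$ and is order-reversing, hence lies in $K_p^*\subseteq J_p$), and $J_p$ together with a zero is closed under the Rees product on $L(n,p)$, so no product of rank $p$ generators can leave $J_p$ (when the product still has rank $p$). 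So the content is the reverse inclusion.

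For $J_p \subseteq \langle R_\eta \cup L_{\delta^*}\rangle$, I would split into the two cases dictated by $J_p = K_p\cup K_p^*$. If $\alpha\in K_p^*$, then Lemma~\ref{jp2} gives $\alpha\in\langle R_\eta\cup L_{\delta^*}\rangle$ directly, so this case is immediate. If $\alpha\in K_p$, I cannot invoke Lemma~\ref{jp} verbatim because that lemma uses the generating set $R_\eta\cup L_\delta$, and here $L_\delta$ has been replaced by $L_{\delta^*}$; the point to establish is therefore that $L_\delta \subseteq \langle R_\eta\cup L_{\delta^*}\rangle$, after which Lemma~\ref{jp} finishes the job since $K_p=\langle R_\eta\cup L_\delta\rangle$. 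To get $L_\delta$ from $R_\eta\cup L_{\delta^*}$, I would compose an order-reversing element of $L_{\delta^*}$ with another order-reversing map whose domain partition matches its image, so that the ``double reversal'' restores an order-preserving image while keeping the same kernel; concretely, each $\beta^*\in L_{\delta^*}$ satisfies $\delta^*\beta^* = $ (an element of $L_\delta$), exploiting that $\delta^*\in L_{\delta^*}$ (it is $\delta$ with image reversed and so has the same kernel) and that pre-composing an order-reversing map of rank $p$ with $\delta^*$ reverses the image a second time. One checks $\ker$ is preserved and rank stays $p$, so the resulting map is $\mathcal L$-related to $\delta$, i.e.\ lies in $L_\delta$, and by ranging $\beta^*$ over $L_{\delta^*}$ we sweep out all of $L_\delta$. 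Hence $L_\delta\subseteq\langle R_\eta\cup L_{\delta^*}\rangle$, and therefore $K_p = \langle R_\eta\cup L_\delta\rangle \subseteq \langle R_\eta\cup L_{\delta^*}\rangle$.

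Combining the two cases, $J_p = K_p\cup K_p^* \subseteq \langle R_\eta\cup L_{\delta^*}\rangle$, and with the easy reverse inclusion this yields $\langle R_\eta\cup L_{\delta^*}\rangle = J_p$, as required. The main obstacle I anticipate is the bookkeeping in the step $L_\delta\subseteq\langle R_\eta\cup L_{\delta^*}\rangle$: one must produce, for a \emph{given} element of $L_\delta$ (a prescribed order-preserving image together with the kernel common to all of $L_\delta$), an explicit factorization through order-reversing pieces of $L_{\delta^*}$, verifying both that the kernel is exactly that of $\delta$ and that the image is the intended order-preserving $p$-subset without any collapse of rank. Writing the two-row matrices for $\delta^*$, for a general $\beta^*\in L_{\delta^*}$, and for their product — and confirming the product is the desired element of $L_\delta$ — is routine but must be done carefully; everything else reduces to citing Lemmas~\ref{jp} and \ref{jp2}.
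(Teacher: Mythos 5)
Your proof is correct, and the key computation does check out: for $\beta^*\in L_{\delta^*}$ with (reversed) image $\{r,\ldots,p+r-1\}$, the product $\delta^*\beta^*$ has kernel $\textnormal{ker }\delta$, rank $p$, and order-preserving image $\{r,\ldots,p+r-1\}$, so it is exactly the element of $L_\delta$ with that image, and letting $r$ run from $1$ to $n-p+1$ sweeps out all of $L_\delta$. The paper organizes the reduction differently: instead of recovering $L_\delta$ and then citing Lemma~\ref{jp}, it argues that, given Lemma~\ref{jp2}, it suffices to show $K_p\subseteq\langle K_p^*\rangle$, and it factors an arbitrary $\alpha\in K_p$ as $\alpha^*\epsilon^*$, where $\alpha^*$ is $\alpha$ with image reversed and $\epsilon^*$ is the image-reversed idempotent whose kernel classes are aligned with $\textnormal{Im }\alpha$ (idempotents themselves arising as $(\alpha^*)^2$). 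Both arguments rest on the same underlying observation --- a composite of two order-reversing rank-$p$ contractions is order-preserving of rank $p$ --- but yours localizes the work to the $n-p+1$ elements of $L_\delta$ and then reuses Lemma~\ref{jp}, whereas the paper works elementwise on all of $K_p$ and reuses Lemma~\ref{jp2}. Your route is, if anything, slightly more economical, and you are also more careful than the paper in flagging that the inclusion $\langle R_\eta\cup L_{\delta^*}\rangle\subseteq J_p$ only holds modulo products that drop rank, i.e., that the equality is really one of generating sets for the Rees quotient $W_p$ --- a caveat the paper leaves implicit throughout.
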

\begin{proof}
Since $J_p= K_p \cup K_p^*$, to proof $\langle R_\eta \cup L_{\delta^*} \rangle = J_p$,  is suffices  by Lemma \ref{jp3} to show that $K_p \subseteq\langle K_p^* \rangle$. Now, let  $$\alpha= \left( \begin{array}{ccccc}
            \{1,\ldots,a+1\} & a+2& \ldots & a+{p-1} & \{a+p,\ldots,n\} \\
            b+1 & b+2 & \ldots &b+{p-1}& b+p
          \end{array} \right)$$ \noindent be in $K_p$, if $\alpha$ is an idempotent, then there exists $\alpha^* \in K_p^*$ such that $(\alpha^*)^2=\alpha.$ Suppose $\alpha$ is not an idempotent, define $$\epsilon= \left( \begin{array}{cccccc}
            \{1,\ldots,b+1\} & b+2& b+3 & \ldots & b+{p-1} & \{b+p,\ldots,n\} \\
            b+1 & b+2 &  b+3& \ldots &b+{p-1}& b+p
          \end{array} \right)$$ \noindent which is an idempotent in $K_p$, then $\alpha$ can be written as $\alpha=\alpha^*\epsilon^*$.
\end{proof}
Before stating the main theorem of this section, let
\begin{equation}\label{mp} M(n,p)=\{\alpha \in \textnormal{Reg}(\mathcal{ORCT}_n) : |\textnormal{Im }\alpha|\leq p\} \quad (1\leq p\leq n). \end{equation}
  And let \begin{equation} W_p=M(n,p)\backslash M(n,p-1) \end{equation} be Rees quotient semigroup on $M(n,p)$. From Lemma \ref{jp3} and Remark \ref{rtabl} we have:
\begin{lemma}\label{lrees2}
  $(\textnormal{R}_\eta \cup \textnormal{L}_{\delta^*})\backslash \delta$ is the minimal generating set for the Rees quotient semigroup  $W_p$.
 \end{lemma}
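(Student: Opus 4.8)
The plan is to mirror the structure of Lemma~\ref{lrees} (the order-preserving case), adapting it to the semigroup $W_p$ by exploiting Lemma~\ref{jp3} together with the observations collected in Remark~\ref{rtabl}. First I would note that $W_p$, as a Rees quotient, is $J_p \cup \{0\}$ with multiplication that collapses to $0$ any product whose rank drops below $p$; thus a generating set for $W_p$ must generate every element of $J_p$ that is not itself a product of two rank-$p$ elements of $J_p$, i.e. the ``non-decomposable'' elements. By Lemma~\ref{jp3}, $\langle R_\eta \cup L_{\delta^*}\rangle = J_p$, so $R_\eta \cup L_{\delta^*}$ generates $W_p$; the task is to pare it down and prove minimality.

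Next I would show $\delta$ is redundant, i.e. $(R_\eta \cup L_{\delta^*})\setminus\delta$ still generates $W_p$: the only relation that used $\delta$ in the order-preserving argument was $\alpha\delta = \alpha$ for $\alpha \in R_\eta$ and $\delta\alpha=\alpha$ for $\alpha\in L_\delta$ (Remark~\ref{rtabl}(iii),(iv)); here $\delta$ appears as an element of $R_\eta$ (Remark~\ref{rtabl}(i)), and one recovers $\delta$ from within $W_p$ via $\tau^*\eta^* $ or via Remark~\ref{rtabl}(ii) $\tau\eta=\delta$ together with the fact that $\eta, \tau$ lie among the remaining generators (or, since $\delta$ is idempotent of rank $p$, $\delta = (\delta^*)^2$, and $\delta^* \ne \delta$ is in $L_{\delta^*}$). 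So $\delta$ can be dropped. For minimality I would argue exactly as in the order-preserving case: by Remark~\ref{rtabl}(v), any product of two elements of $R_\eta\setminus\delta$, or of two elements of $L_{\delta^*}\setminus\delta$ (using that $L_{\delta^*}$ is the $\mathcal L$-class of the rank-$p$ idempotent $\delta^*$, so the same rank-drop phenomenon applies), has rank $< p$, hence equals $0$ in $W_p$; likewise any product mixing an element of $R_\eta$ on the left is absorbed, and an element of $L_{\delta^*}$ on the right is absorbed. Consequently every element of $(R_\eta\cup L_{\delta^*})\setminus\delta$ is indecomposable in $W_p\setminus\{0\}$ and must belong to any generating set, so $(R_\eta\cup L_{\delta^*})\setminus\delta$ is minimal.

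The step I expect to be the genuine obstacle is verifying the ``absorption'' and ``rank-drop'' claims for the starred class $L_{\delta^*}$, since Remark~\ref{rtabl} is stated for the order-preserving configuration of Table~\ref{tabl}: one must check that $\delta^*\alpha = \alpha$ for $\alpha\in L_{\delta^*}$, that $\alpha\delta^*$ has rank $<p$, and that the product of any two elements of $L_{\delta^*}\setminus\delta$ has rank $<p$, all of which hinge on the precise form of order-reversing maps in $K_p^*$ and on $\delta^*$ being the unique idempotent in $L_{\delta^*}$ (recall $\mathrm{Reg}(\mathcal{ORCT}_n)$ is $\mathcal L$-unipotent). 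I would handle this by writing a generic $\alpha^* \in L_{\delta^*}$ in the normal form from [\cite{am}, Lem.~12] and computing $\mathbf{Ker}$ and image of the relevant products directly, reducing everything to the already-established order-preserving statements via the correspondence $\alpha \leftrightarrow \alpha^*$ of the Remark preceding the lemma. Finally, I would remark that $|(R_\eta\cup L_{\delta^*})\setminus\delta| = |R_\eta| + |L_{\delta^*}| - 1 = (n-p+1) + (n-p+1) - 1 = 2(n-p)+1$, recording the size of the minimal generating set for later use in computing $\mathrm{Rank}(\textnormal{Reg}(\mathcal{ORCT}_n))$.
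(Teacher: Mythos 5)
Your overall strategy is the same as the paper's, which offers no argument beyond citing Lemma~\ref{jp3} and Remark~\ref{rtabl}, and the generation half of your argument is sound: $R_\eta\cup L_{\delta^*}$ generates $W_p$ by Lemma~\ref{jp3} (the two-factor products used there never drop rank), and $\delta$ is recoverable as $(\delta^*)^2$, so it may be dropped. (Your alternative route via $\tau\eta=\delta$ does not work as stated, since $\tau$ lies in $L_\delta$, not in $L_{\delta^*}$, and so is not among the remaining generators; the $(\delta^*)^2$ route is the correct one.) The problem is the minimality half, and it sits exactly where you yourself flagged the ``genuine obstacle.'' Your claim that every element of $(R_\eta\cup L_{\delta^*})\backslash\delta$ is indecomposable in $W_p\setminus\{0\}$ is false: Remark~\ref{rtabl}(v) controls products of two elements taken from within $R_\eta$ or from within $L_\delta$, but it says nothing about the cross products $L_{\delta^*}\cdot R_\eta$, and these need not drop rank. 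Indeed every element of $R_\eta\backslash\delta$ is the unique proposed generator with its kernel, and every element of $L_{\delta^*}\backslash\delta^*$ is the unique proposed generator with its image, so those elements really are indispensable (a rank-$p$ product inherits the kernel of its first factor and the image of its last); but $\delta^*$ shares its image $\{1,\ldots,p\}$ with all of $R_\eta$ and its kernel with all of $L_{\delta^*}$, and it decomposes.

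Concretely, let $\beta$ be the element of $L_{\delta^*}$ with image $\{2,\ldots,p+1\}$ and let $\gamma$ be the element of $R_\eta$ whose kernel is $\{\{1,2\},\{3\},\ldots,\{p\},\{p+1,\ldots,n\}\}$; then $\beta\gamma=\delta^*$ with no rank drop, and $\beta\neq\delta^*$, $\gamma\neq\delta$. For instance, with $n=4$, $p=3$,
\begin{equation*}
\left(\begin{array}{ccc}1&2&\{3,4\}\\4&3&2\end{array}\right)\left(\begin{array}{ccc}\{1,2\}&3&4\\1&2&3\end{array}\right)=\left(\begin{array}{ccc}1&2&\{3,4\}\\3&2&1\end{array}\right)=\delta^*,
\end{equation*}
where the first factor lies in $L_{\delta^*}\setminus\{\delta^*\}$ and the second is $\eta$ itself. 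Consequently $(R_\eta\cup L_{\delta^*})\backslash\{\delta,\delta^*\}$ already generates $W_p$ (it produces $\delta^*$, hence $\delta=(\delta^*)^2$, hence everything else), so the set in the statement is not minimal. This is not a gap you could have closed by computing more carefully: the statement itself is wrong as it stands, and the cardinality $2(n-p)+1$ you record at the end (which feeds Theorem~\ref{trank2} and Corollaries~\ref{cr3}--\ref{cr4}) is the size of a generating set, not of a minimal one; the correct minimal generating set appears to be $(R_\eta\cup L_{\delta^*})\backslash\{\delta,\delta^*\}$, of size $2(n-p)$.
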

The next proposition is also analogue to Proposition \ref{prees} which plays an important role in finding the generating set for the subsemigroup $M(n,p)$.
\begin{proposition}\label{prees2} For $n\geq4,\; \langle J_p \rangle\,\subseteq \,\langle J_{p+1}\rangle$ for all $1\leq p\leq n-2$.
\end{proposition}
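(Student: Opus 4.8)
The plan is to follow the template of Proposition~\ref{prees}. It suffices to prove $J_p\subseteq\langle J_{p+1}\rangle$, and since $J_p=K_p\cup K_p^*$ while $K_p\subseteq\langle K_p\rangle\subseteq\langle K_{p+1}\rangle\subseteq\langle J_{p+1}\rangle$ by Proposition~\ref{prees}, everything reduces to showing $K_p^*\subseteq\langle J_{p+1}\rangle$. By Lemma~\ref{jp2}, every $\alpha^*\in K_p^*$ is a product of an order-preserving regular map of height $p$ (which lies in $K_p\subseteq\langle J_{p+1}\rangle$) and an order-reversing element of $\textnormal{L}_{\delta^*}$, so it is enough to place in $\langle J_{p+1}\rangle$ the order-reversing members of $\textnormal{L}_{\delta^*}$, that is, the maps
\[\lambda_c=\left(\begin{array}{ccccc}1&2&\cdots&p-1&\{p,\ldots,n\}\\ c+p-1&c+p-2&\cdots&c+1&c\end{array}\right)\qquad(1\le c\le n-p+1),\]
where $\lambda_1=\delta^*$ and $\lambda_{n-p+1}=\tau^*$.

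For $1\le c\le n-p$ I would split the last kernel class $\{p,\ldots,n\}$ of $\lambda_c$ into $\{p\}$ and $\{p+1,\ldots,n\}$; since $n\ge p+2$ both pieces are non-empty. This produces the order-reversing element
\[\gamma^*=\left(\begin{array}{ccccc}1&2&\cdots&p&\{p+1,\ldots,n\}\\ p+1&p&\cdots&2&1\end{array}\right)\in K_{p+1}^*,\]
and a short computation gives $\lambda_c=\gamma^*\theta$, where
\[\theta=\left(\begin{array}{ccccc}\{1,2\}&3&\cdots&p+1&\{p+2,\ldots,n\}\\ c&c+1&\cdots&c+p-1&c+p\end{array}\right)\]
is order-preserving, has only its first and last kernel classes non-singleton, and — because $c+p\le n$ in this range and $n\ge p+2$ — is a well-defined regular map of height $p+1$, so $\theta\in K_{p+1}$. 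Hence $\lambda_c\in\langle J_{p+1}\rangle$ for $1\le c\le n-p$; in particular $\delta^*=\lambda_1\in\langle J_{p+1}\rangle$. The point that makes this work, and that dictates splitting the \emph{last} class rather than the first, is that the re-merging collapse forced on $\theta$ falls at the bottom end of its image, which is compatible with $\theta$ being a regular contraction; splitting the first class would push that collapse to the top and either destroy regularity of the second factor or drop its height to $p$.

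It remains to treat $\tau^*=\lambda_{n-p+1}$, for which the splitting above breaks down (it would force the value $c+p=n+1\notin[n]$ into the image of the second factor). Instead I would use the identity $\tau^*=\delta^*\tau$, a one-line computation in the spirit of Remark~\ref{rtabl}(ii). Since $\tau\in K_p\subseteq\langle J_{p+1}\rangle$ by Proposition~\ref{prees} and $\delta^*\in\langle J_{p+1}\rangle$ by the previous step, we get $\tau^*\in\langle J_{p+1}\rangle$. This finishes the verification that $K_p^*\subseteq\langle J_{p+1}\rangle$, hence $\langle J_p\rangle\subseteq\langle J_{p+1}\rangle$.

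The step I expect to cause the most trouble is this last one: the generator $\tau^*$ (and, in the order-preserving-versus-reversing bookkeeping, $\eta^*$) cannot be realised as a product of two height-$(p+1)$ regular contractions, because its only non-singleton kernel class is a boundary class whose splitting wrecks the complementary factor; one is forced to manufacture it from $\delta^*$ by multiplying, on the correct side, by an order-preserving map of height $p$. The remaining difficulty is purely a matter of tracking translations so that every intermediate image stays inside $[n]$ — this is exactly where the hypothesis $n\ge 4$ (equivalently $n\ge p+2$ for $1\le p\le n-2$) is used, and the extreme value $c=n-p$ in the family $\lambda_c$ deserves an explicit check.
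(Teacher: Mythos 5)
Your proposal is correct and follows essentially the same route as the paper: reduce via Proposition~\ref{prees} to the order-reversing elements with the kernel of $\delta$, factor each generic such element as a product of one order-preserving and one order-reversing element of height $p+1$ (you put the reversing factor first, the paper puts it second), and handle $\tau^*$ separately by expressing it through already-generated elements ($\tau^*=\delta^*\tau$ in your version, $\tau^*=\delta\cdot\sigma$ with $\delta=\tau\eta$ in the paper's). Both decompositions check out, so the differences are cosmetic.
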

\begin{proof}
The proof follows the same pattern as the proof of the Proposition \ref{prees}.
We want to show that $(\textnormal{R}_\eta \cup \textnormal{L}_{\delta^*}  )\subseteq \,\langle J_{p+1}\rangle$ and by Proposition \ref{prees} we only need to show that $\textnormal{L}_{\delta^*} \subseteq \,\langle J_{p+1}\rangle$. Now  Let  $\alpha$ be in  $\textnormal{L}_{\delta^*}$,

Case I: $\alpha\in \textnormal{L}_{\delta^*}\backslash \tau^* $, then $\alpha$ is the of the form \begin{equation*}\left( \begin{array}{ccccc}
           1&2& \cdots&p-1& \{p,p+1,\ldots n\} \\
            p+r-1 &   p+r-2& \cdots& r+1& r
          \end{array} \right)\; (r=1,2,\ldots, n-p),\end{equation*} and it can be written as

\begin{equation*}\resizebox{1\textwidth}{!}{$\alpha=
\left( \begin{array}{ccccc}
           1&2& \cdots&p& \{p+1,\ldots n\} \\
            2 &   3& \cdots&p+1 &p+2
          \end{array} \right)
         \left( \begin{array}{ccccc}
           1&2& \cdots&p& \{p+1,\ldots n\} \\
            p+r &   p+r-1& \cdots& r+1& r
          \end{array} \right),$}
          \end{equation*} a product of two elements of $J_{p+1}$.

Case II:  $\alpha=\tau^*$ then $\alpha$ can be written as
\begin{equation*}\alpha=
\left( \begin{array}{ccccc}
           1&2& \cdots&p-1& \{p,\ldots n\} \\
            1 &   2& \cdots&p-1 &p
          \end{array} \right)
         \left( \begin{array}{ccccc}
           1&2& \cdots&p& \{p+1,\ldots n\} \\
            n &   n-1& \cdots& j& j-1
          \end{array} \right).
          \end{equation*}
The first element in the product above is $\delta \in J_p$, but it was shown in Remark~\ref{rtabl} that it can be written as $\tau\eta$ which were both shown in Proposition \ref{prees} that they can be expressed as product of elements of $J_{p+1}$. Hence the proof.
\end{proof}

\begin{remark}
Notice also that, by Proposition \ref{prees2} above, for $2\leq p\leq n-1$ the generating set for $W_p$ generates the whole $M(n, p)$
\end{remark}
The next theorem gives us the rank of subsemigroup $M(n,p)$ for $2\leq p\leq n-1$.
\begin{theorem}\label{trank2} Let $M(n, p)$ be as defined in equation \eqref{mp}. Then
for $n\geq 4$ and $2<p\leq n-1$,  the rank of $M(n,p)$ is $2(n-p)+1.$
\end{theorem}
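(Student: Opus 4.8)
The plan is to prove the two inequalities $\textnormal{rank}(M(n,p))\le 2(n-p)+1$ and $\textnormal{rank}(M(n,p))\ge 2(n-p)+1$ separately, exactly in the way Theorem~\ref{trank} was deduced from Lemma~\ref{lrees} and Remark~\ref{rrank}. For the upper bound I would take the set $G:=(\textnormal{R}_\eta\cup\textnormal{L}_{\delta^*})\setminus\{\delta\}$ appearing in Lemma~\ref{lrees2}. That lemma gives $\langle G\rangle=W_p$, hence $\langle G\rangle\supseteq J_p$; Proposition~\ref{prees2} then yields the chain $\langle J_1\rangle\subseteq\langle J_2\rangle\subseteq\cdots\subseteq\langle J_p\rangle$, so $\langle G\rangle\supseteq J_1\cup\cdots\cup J_p=M(n,p)$ whenever $2\le p\le n-1$, which is precisely the Remark following Proposition~\ref{prees2}. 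It then only remains to count $G$: the set $\textnormal{R}_\eta$ consists of the $n-p+1$ order-preserving maps of rank $p$ with image $\{1,\dots,p\}$ (one for each admissible kernel displayed in Table~\ref{tabl}), while $\textnormal{L}_{\delta^*}$ consists of the $n-p+1$ order-reversing maps of rank $p$ with the kernel of $\delta$; these two families are disjoint because for $p>1$ no rank-$p$ map is simultaneously order-preserving and order-reversing, and exactly one element ($\delta$, which lies in $\textnormal{R}_\eta$ but not in $\textnormal{L}_{\delta^*}$) is deleted, so $|G|=2(n-p+1)-1=2(n-p)+1$ and hence $\textnormal{rank}(M(n,p))\le 2(n-p)+1$.

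For the lower bound, let $A$ be an arbitrary generating set of $M(n,p)$. The key observation is that the rank of a product never exceeds the rank of any of its factors, so if $\alpha\in J_p$ is written as $a_1a_2\cdots a_k$ with each $a_i\in A$, then every factor $a_i$ and every prefix $a_1\cdots a_i$ already has rank $p$; hence each $a_i\in A\cap J_p$ and no prefix leaves $J_p$, so the same word is also a factorisation of $\alpha$ inside the Rees quotient $W_p$. Thus $A\cap J_p$ generates $W_p$, and since (by Lemma~\ref{lrees2}) the minimal generating set of $W_p$ has $2(n-p)+1$ elements, $|A|\ge|A\cap J_p|\ge 2(n-p)+1$. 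Combining this with the upper bound gives $\textnormal{rank}(M(n,p))=2(n-p)+1$.

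The hard part is the lower bound: one must use Lemma~\ref{lrees2} in its strong reading (that the generating set it exhibits is of minimum cardinality, not merely irredundant) and check carefully that restricting a generating set of $M(n,p)$ to the top $\mathcal{J}$-class $J_p$ really does produce a generating set of the Rees quotient $W_p$, the heart of the matter being that rank cannot grow under composition. A secondary point worth care is the hypothesis $p>2$, which guards against degeneracies at $p=2$ arising from the rank-one (constant) maps that the order-preserving and order-reversing parts have in common; the case $p=2$ is therefore best handled by a separate argument rather than absorbed into this theorem.
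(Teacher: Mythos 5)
Your proposal is correct and follows essentially the same route as the paper: the paper's proof simply counts $(\textnormal{R}_\eta\cup\textnormal{L}_{\delta^*})\setminus\{\delta\}$ as $(n-p)+(n-p)+1=2(n-p)+1$, relying on Lemma~\ref{lrees2} and the remark after Proposition~\ref{prees2} exactly as you do. Your explicit lower-bound argument (any generating set of $M(n,p)$ restricted to $J_p$ must generate the Rees quotient $W_p$, since rank cannot increase under composition) is a worthwhile elaboration of what the paper compresses into the word ``minimal,'' but it is the same proof in substance.
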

\begin{proof}
To proof this, we only need to compute the cardinality of the set  $(R_\eta \cup L_{\delta^*})\backslash \delta$, which from Table~\ref{tabl} we easily obtain $(n-p)+(n-p)+1=2(n-p)+1$.
\end{proof}
As a consequence, we have the following corollaries.
 \begin{corollary}\label{cr3}Let $M(n, p)$ be as defined in equation \eqref{mp}. Then
 the rank of $M(n,n-1)$ is 3.
 \end{corollary}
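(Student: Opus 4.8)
The plan is to deduce Corollary~\ref{cr3} directly from Theorem~\ref{trank2} by specializing $p = n-1$, which forces $2(n-p)+1 = 2(n-(n-1))+1 = 2\cdot 1 + 1 = 3$. Thus the statement is essentially a numerical instance of the preceding theorem. First I would check that the hypotheses of Theorem~\ref{trank2} are still in force: the theorem is stated for $n \ge 4$ and $2 < p \le n-1$, and with $p = n-1$ this requires $n - 1 > 2$, i.e. $n \ge 4$, so the ranges are consistent and no edge case is lost. Hence for $n \ge 4$ the rank of $M(n,n-1)$ is $2(n-(n-1))+1 = 3$, which is the claim.

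Second, since Corollary~\ref{cr3} is presumably meant to hold for all relevant $n$ (in particular it feeds into the later computation of $\mathrm{Rank}(\textnormal{Reg}(\mathcal{ORCT}_n)) = 4$, via the relation $\textnormal{Reg}(\mathcal{ORCT}_n) = M(n,n-1) \cup id_{[n]}$, mirroring Corollary~\ref{cr2}), I would also verify the small cases $n = 2$ and $n = 3$ by hand to be safe. For these, one can list $M(n,n-1)$ explicitly using the description of $K_p$ from Table~\ref{tabl} together with the order-reversing analogues $K_p^*$, and exhibit a generating set of size $3$ (the natural candidates being $\eta$, $\tau$, and one order-reversing element such as $\delta^*$ or $\tau^*$, restricted to the top height $p = n-1$) while checking minimality. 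Minimality follows the same mechanism as in Lemma~\ref{lrees2}: working in the Rees quotient $W_{n-1}$, any generating set must contain elements hitting each $\mathcal{R}$-class and $\mathcal{L}$-class at the top level, and Remark~\ref{rtabl}(v) shows products of two generators drop the rank, so no generator is redundant.

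The main obstacle — really the only one — is making sure the index ranges match up and that $W_{n-1}$, $M(n,n-1)$, and the generating set $(\textnormal{R}_\eta \cup \textnormal{L}_{\delta^*})\setminus\delta$ behave as expected when $p$ is pushed to its maximal admissible value $n-1$; at $p=n-1$ the sets $\textnormal{R}_\eta$ and $\textnormal{L}_{\delta^*}$ each have exactly $n - p = 1$ "extra" element beyond $\delta$, plus the shared structural element, giving the count $1 + 1 + 1 = 3$ reported in the proof of Theorem~\ref{trank2}. So the proof is simply:

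\begin{proof}
This is the special case $p = n-1$ of Theorem~\ref{trank2}: for $n \ge 4$ we have $2 < n-1 \le n-1$, so the rank of $M(n,n-1)$ equals $2(n-(n-1))+1 = 3$.
\end{proof}
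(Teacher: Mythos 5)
Your proof is correct and matches the paper's intent: Corollary~\ref{cr3} is stated there simply "as a consequence" of Theorem~\ref{trank2}, i.e.\ the specialization $p=n-1$ giving $2(n-(n-1))+1=3$, which is exactly what you do, including the check that $p=n-1$ satisfies $2<p\le n-1$ precisely when $n\ge 4$. The extra remarks about small $n$ and minimality are harmless but not needed for the claim as stated.
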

 \begin{corollary}\label{cr4} Let $\mathcal{ORCT}_n$ be as defined in equation \eqref{orctn}. Then
 the rank of $\textnormal{Reg}(\mathcal{ORCT}_n)$ is 4.
 \end{corollary}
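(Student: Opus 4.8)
The plan is to present $\textnormal{Reg}(\mathcal{ORCT}_n)$ as the ideal $M(n,n-1)$ together with its two rank-$n$ elements, and then to show that the rank increases by exactly one relative to Corollary~\ref{cr3}. Write $g$ for the order-reversing permutation $x\mapsto n+1-x$ of $[n]$: it is an isometry (hence a contraction) and it is order-reversing, so $g\in\mathcal{ORCT}_n$, and since it is invertible it lies in $\textnormal{Reg}(\mathcal{ORCT}_n)$, with $g^{2}=id_{[n]}$.

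First I would observe that the only rank-$n$ elements of $\textnormal{Reg}(\mathcal{ORCT}_n)$ are $id_{[n]}$ and $g$, because a rank-$n$ transformation of $[n]$ is a permutation and a permutation that is a contraction must be an isometry of the chain, of which there are just two. Hence $\textnormal{Reg}(\mathcal{ORCT}_n)=M(n,n-1)\cup\{id_{[n]},g\}$, with $M(n,n-1)$ a two-sided ideal. For the upper bound, pick a minimal generating set $\{\gamma_{1},\gamma_{2},\gamma_{3}\}$ of $M(n,n-1)$, of size $3$ by Corollary~\ref{cr3} (for instance $(\textnormal{R}_{\eta}\cup \textnormal{L}_{\delta^{*}})\setminus\delta$ taken with $p=n-1$). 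Then $\{\gamma_{1},\gamma_{2},\gamma_{3},g\}$ generates $\textnormal{Reg}(\mathcal{ORCT}_n)$: it generates $M(n,n-1)$, it contains $g$, and $id_{[n]}=g^{2}$. Therefore $\textnormal{Rank}(\textnormal{Reg}(\mathcal{ORCT}_n))\le 4$.

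For the lower bound, let $A$ be any generating set of $\textnormal{Reg}(\mathcal{ORCT}_n)$. Since $\textnormal{rank}(\alpha\beta)\le\min\{\textnormal{rank}(\alpha),\textnormal{rank}(\beta)\}$, no rank-$n$ element is a product of elements of smaller rank, so $A\cap\{id_{[n]},g\}$ must itself generate the two-element group $\{id_{[n]},g\}$; as $\{id_{[n]}\}$ generates only $\{id_{[n]}\}$, this forces $g\in A$. Discarding a redundant copy of $id_{[n]}=g^{2}$ if present, we may write $A=B\cup\{g\}$ with $B\subseteq M(n,n-1)$ and $\langle B\cup\{g\}\rangle\supseteq M(n,n-1)$; since $g\notin B$, it suffices to prove $|B|\ge 3$. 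I expect this to be the main obstacle: one must show that adjoining the single extra element $g$ still does not allow $M(n,n-1)$ to be generated from fewer than three of its own members. The natural attack is to work modulo the ideal $M(n,n-2)$ and examine the top $\mathcal{D}$-class $J_{n-1}$, whose egg-box diagram has two $\mathcal{R}$-classes, two $\mathcal{L}$-classes and $\mathcal{H}$-classes of order two: left and right multiplication by $g$ only permute these Green's classes, and the delicate point is to show that no such reflection can reduce the number of generators required, after which Proposition~\ref{prees2} propagates a generating set of $J_{n-1}$ down to all lower $\mathcal{D}$-classes. The identification of the rank-$n$ elements and the inequality $\textnormal{Rank}(\textnormal{Reg}(\mathcal{ORCT}_n))\le 4$ are routine; essentially all the content lies in the claim $|B|\ge 3$.
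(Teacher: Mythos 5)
Your identification of the rank-$n$ elements and your upper bound $\le 4$ are fine (and match the paper's own argument), but the step you explicitly defer --- showing that adjoining $g=id^{*}_{[n]}$ cannot let $M(n,n-1)$ be generated by fewer than three of its own elements --- is the entire content of the lower bound, and it fails: the ``reflection'' by $g$ really does collapse the generating requirements. Work in the top class $J_{n-1}$, where $p=n-1$, $j=2$, so the two kernels occurring are $K=\{\{1,2\},3,\dots,n\}$ and $K'=\{1,\dots,n-2,\{n-1,n\}\}$, the two images are $I=\{1,\dots,n-1\}$ and $I'=\{2,\dots,n\}$, and each element is determined by its kernel, its image and its orientation. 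Take the single element $\eta=\left(\begin{smallmatrix}\{1,2\}&3&\cdots&n\\ 1&2&\cdots&n-1\end{smallmatrix}\right)$. A direct computation gives $\eta g\eta=\eta^{*}$, with every intermediate product of rank $n-1$; and the eight elements $\eta,\ g\eta,\ \eta g,\ g\eta g,\ \eta^{*},\ g\eta^{*},\ \eta^{*}g,\ g\eta^{*}g$ realise all eight combinations of kernel in $\{K,K'\}$, image in $\{I,I'\}$ and orientation, i.e.\ all of $J_{n-1}$ (for instance $g\eta=\delta^{*}$, $g\eta^{*}=\delta$, $g\eta g=\tau$, $g\eta^{*}g=\tau^{*}$). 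Hence $J_{n-1}\subseteq\langle\eta,g\rangle$, and Proposition~\ref{prees2} gives $J_p\subseteq\langle J_{n-1}\rangle$ for all $p<n-1$, so $\langle\eta,g\rangle\supseteq M(n,n-1)\cup\{g,g^{2}\}=\textnormal{Reg}(\mathcal{ORCT}_n)$. In your notation $B=\{\eta\}$ suffices, the inequality $|B|\ge 3$ is false, and the proposal cannot be completed.

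This is more than a gap in your write-up: modulo Proposition~\ref{prees2}, which is the paper's own result, it shows $\textnormal{Reg}(\mathcal{ORCT}_n)=\langle\eta,g\rangle$ has rank $2$ for $n\ge 4$, not $4$, so the corollary as stated appears to be wrong. The same phenomenon already undermines the ingredients you would need: the relation $\tau^{*}\eta=\delta^{*}$ (the order-reversed analogue of Remark~\ref{rtabl}(ii)) shows that $\delta^{*}$ is redundant in $(\textnormal{R}_\eta\cup\textnormal{L}_{\delta^{*}})\setminus\delta$, so Lemma~\ref{lrees2} is not a minimal generating set for $W_{n-1}$, and $\{\eta,\tau^{*}\}$ alone generates $M(n,n-1)$, contradicting Corollary~\ref{cr3}. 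Note finally that the paper's own proof of this corollary consists only of the upper-bound observation (three generators of $M(n,n-1)$ plus $id^{*}$, with $(id^{*})^{2}=id$) and says nothing about the lower bound; your instinct that essentially all the content lies in the claim $|B|\ge 3$ was exactly right --- but that claim, and with it the stated value of the rank, does not survive the computation above.
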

 \begin{proof} The only elements of $\textnormal{Reg}(\mathcal{OCT}_n)$ that are not in $ M(n,n-1)$ are the elements $\{id_{[n]},id_{[n]}^*\}$. It is clear that $(id^*)^2=id$.
 \end{proof}

\bibliographystyle{amsplain}

\bigskip
\bigskip

\bigskip
\bigskip
\bigskip
\bigskip
\bigskip
\bigskip
\bigskip
\bigskip

{\footnotesize {\bf First Author}\; \\ {Department of
Mathematics}, {Bayero University Kano, P.O.Box 3011,} {Kano, Nigera.}\\
{\tt Email: mmzubairu.mth\@ buk.edu.ng}\\

{\footnotesize {\bf Second Author}\; \\ {Khalifa University, P. O. Box 127788, Sas al Nakhl, Abu Dhabi, UAE}\\
{\tt Email: abdullahi.umar@ku.ac.ae}\\

{\footnotesize {\bf Third Author}\; \\ {Department of Mathematics, and Computer Sciences, Sule Lamido University, Kafin Hausa
,  Nigera.}\\
{\tt Email: muhammadaliyu2@nda.edu.ng}\\

\end{document}